\documentclass[12pt,reqno,a4paper]{amsart}

 \usepackage{color}
\usepackage{amsmath,amssymb,amsfonts,amsthm,bm}
\usepackage{mathrsfs}
\usepackage{graphicx}
\usepackage{enumerate}

\usepackage[numbers, sort&compress]{natbib}
 \usepackage[shortlabels]{enumitem}
\usepackage[left=1.05in, right=1.05in]{geometry}
  \headsep=21pt
\textheight=54pc 
 
   \setlength{\parindent}{2em}
\setlength{\parskip}{0.3\baselineskip}


 \newtheorem{theorem}{Theorem}[section]
 \newtheorem{corollary}[theorem]{Corollary}
 \newtheorem{lemma}[theorem]{Lemma}
 
 \theoremstyle{assumption}
   
\theoremstyle{definition}

 \theoremstyle{remark}
\newtheorem{remark}[theorem]{Remark}
\numberwithin{equation}{section}

 \newcommand{\eps}{\varepsilon}
 
\newcommand{\norm}[1]{\Vert#1\Vert}
 \newcommand{\abs}[1]{\left\vert#1\right\vert}
\newcommand{\set}[1]{\left\{#1\right\}}
 \newcommand{\inner}[1]{\left(#1\right)}
\newcommand{\comi}[1]{\left<#1\right>}
\newcommand{\com}[1]{\left[#1\right]}
 \newcommand{\normm}[1]{{ \vert\kern-0.25ex \vert\kern-0.25ex \vert #1 
     \vert\kern-0.25ex \vert\kern-0.25ex \vert}}


\makeatletter

\@namedef{subjclassname@2020}{%
  \textup{2020} Mathematics Subject Classification}

\def\@startsection#1#2#3#4#5#6{%
 \if@noskipsec \leavevmode \fi
 \par \@tempskipa #4\relax
 \@afterindentfalse
 \ifdim \@tempskipa <\z@ \@tempskipa -\@tempskipa \@afterindentfalse\fi
 \if@nobreak \everypar{}\else
     \addpenalty\@secpenalty\addvspace\@tempskipa\fi
 \@ifstar{\@dblarg{\@sect{#1}{\@m}{#3}{#4}{#5}{#6}}}%
         {\@dblarg{\@sect{#1}{#2}{#3}{#4}{#5}{#6}}}%
}

\def\@settitle{%
  \bgroup
  \centering
  \vglue1cm
  \fontsize{12}{15}\fontseries{b}\selectfont
  \uppercasenonmath\@title
  \@title
  \vskip20pt plus 6pt minus 8pt
  \egroup
}

\def\@setauthors{%
  \begingroup
  \trivlist
  \centering \bfseries
 \normalsize\@topsep30\p@\relax
  \advance\@topsep by -\baselineskip
  \item\relax
  \andify\authors
 {\rmfamily\authors}%
  \endtrivlist
  \endgroup
}

\def\@setaddresses{\par
  \nobreak \begingroup
\normalsize
  \def\author##1{\nobreak\addvspace\bigskipamount}%
  \def\\{\unskip, \ignorespaces}%
  \interlinepenalty\@M
  \def\address##1##2{\begingroup
    \par\addvspace\bigskipamount\noindent
    \@ifnotempty{##1}{(\ignorespaces##1\unskip) }%
    {\ignorespaces##2}\par\endgroup}%
  \def\curraddr##1##2{\begingroup
    \@ifnotempty{##2}{\nobreak\indent{\itshape Current address}%
      \@ifnotempty{##1}{, \ignorespaces##1\unskip}\/:\space
      ##2\par}\endgroup}%
  \def\email##1##2{\begingroup
    \@ifnotempty{##2}{\nobreak\noindent{\itshape E-mail address}%
      \@ifnotempty{##1}{, \ignorespaces##1\unskip}\/: 
       ##2\par}\endgroup}%
   \def\urladdr##1##2{\begingroup
    \@ifnotempty{##2}{\nobreak\indent{\itshape URL}%
      \@ifnotempty{##1}{, \ignorespaces##1\unskip}\/:\space
      \ttfamily##2\par}\endgroup}%
  \addresses
  \endgroup
}

 \renewcommand\section{\@startsection{section}{1}{\z@}%
{27pt plus 6pt minus 8pt}{14pt plus 6pt minus 8pt}
{\center\normalfont\large\bfseries}}
\renewcommand\subsection{\@startsection{subsection}{2}{\z@}%
{27pt plus 6pt minus 8pt}{14pt plus 6pt minus 8pt}
{ \center
\normalfont\bfseries}}

\def\subsubsection{\@startsection{subsubsection}{3}%
  \z@{.5\linespacing\@plus.7\linespacing}{-.5em}%
  {\normalfont\itshape}}

\makeatother

\begin{document}

\title[Weighted  and  hypoelliptic    estimates  for the 
Fokker-Planck Operator]
{Weighted  and maximally hypoelliptic    estimates  for the 
Fokker-Planck Operator with electromagnetic fields}

\author[W.-X. Li]{Wei-Xi Li}

\address[W.-X. Li]{ School of Mathematics and Statistics, Wuhan University, Wuhan 430072, China \& Hubei Key Laboratory of Computational Science, Wuhan University, Wuhan 430072, China
  }

 \email{wei-xi.li@whu.edu.cn}

 \author[J. Zeng]{Juan Zeng}

\address[J.Zeng]{ School of Mathematics and Statistics, Wuhan University, Wuhan 430072, China  
  }

 \email{juan-zeng@whu.edu.cn}

\keywords{maximal estimate, global hypoellipticity,  Fokker-Planck
operator}

\subjclass[2020]{ 35H20;  35Q84}

\begin{abstract}
 We consider a Fokker-Planck operator with  electric potential and  electromagnetic fields.  We establish the sharp weighted and subelliptic estimates,  involving the control of the  derivatives of  electric potential and  electromagnetic fields.  Our proof relies on a  localization argument as well as 
 a careful calculation on commutators. 
\end{abstract}

\maketitle  

\section{Introduction and main results}

There have been several works on the Fokker-Planck operator with electric potential $V(x)$ which is
\begin{eqnarray}\label{FP++}
  K=y\cdot\partial_x-\partial_x V(x)\cdot
  \partial_y-\triangle_y+\frac{\abs y^2}{4}-\frac{n}{2},\quad
  (x,y)\in\mathbb{R}^{2n},
\end{eqnarray}
where $x$ denotes the space variable and $y$ denotes the velocity
variable, and $V(x)$ is a potential defined in the whole space
$\mathbb{R}_x^n$.  It is a degenerate operator with the absence of  diffusion in $x$ variable, and  can be seen as a    Kolmogorov-type  operator.   The classical hypoelliptic techniques and their global counterparts have been developed recently to establish  global  estimates and to investigate the short and long time behavior and the spectral properties for Fokker-Plack operator K in \eqref{FP++}. We refer to Helffer-Nier's notes \cite{MR2130405} for the comprehensive argument on this topic, seeing also the earlier work \cite{MR2034753}  of H\'erau-Nier.   In the first author's work \cite{MR3783786, MR3060700} we improved the previous result  and gave a new criterion involving the microlocal property of potential $V$. Here   we also mention the very recent progress made by  Ben Said-Nier-Viola \cite{MR4156527}  and Ben Said \cite{2018arXiv181206645B}.  Finally as a result of the global   estimates   it enables to answer partially a conjecture stated by Helffer-Nier \cite{MR2130405} which says Fokker-Planck operator $K$ has a compact resolvent if  and only if Witten Laplacian has a compact resolvent.   The necessity part is well-known and the  reverse
implication still remains open with some partial answers; in fact various hypoelliptic techniques,  such as Kohn' method and nilpotent approach (e.g.\cite{MR897103, MR660652, MR436223}),  were developed to establish the resolvent criteria for these two different type operators (see  \cite{MR2130405, MR3775156, MR3783786, MR3060700}).      

Inspired by the recent work  of Helffer-Karaki \cite{2020arXiv201016175H}, we consider here a more general Fokker-Planck operator with  electromagnetic fields besides the electric potential, which reads
\begin{eqnarray}\label{FPE}
  P=y\cdot\partial_x-\partial_x V(x)\cdot
  \partial_y- H(x)\cdot \inner{y\wedge \partial_y} -\triangle_y+\frac{\abs y^2}{4}-\frac{n}{2},\quad
  (x,y)\in\mathbb{R}^{2n},
\end{eqnarray}
where $n=2$ or $3$ and  $H(x)$ is a scalar function of $x$  for $n=2$ and  a vector field $\inner{H_1(x), H_2(x), H_3(x)}$ of only $x$-variable  for $n=3$,  and  $y\wedge \partial_y $ is defined by
\begin{eqnarray*}
	 y\wedge \partial_y=\left\{
	 \begin{aligned}
	 	&y_1\partial_{y_2}-y_2\partial_{y_1},\quad n=2,\\
	 &	\big(y_2\partial_{y_3}-y_3\partial_{y_2},\   y_3\partial_{y_1}-y_1\partial_{y_3}, \ y_1\partial_{y_2}-y_2\partial_{y_1}\big),\quad n=3.
	 \end{aligned}
	 \right.
\end{eqnarray*}
The operator is initiated  by Helffer-Karaki \cite{2020arXiv201016175H},  where they established the maximal estimate by virtue of nilpotent approach,   giving a criteria for the compactness of the resolvent.  Here we aim  to give another proof, basing on a localization argument and a careful calculation on commutators.  
 Note the operator $P$ in \eqref{FPE} is reduced to the operator $K$ given  \eqref{FP++} for $H\equiv 0$; meanwhile the maximal estimates for the Fokker-Planck operator with pure electromagnetic fields (i.e.,$V\equiv 0$) was investigated by Zeinab Karaki \cite{2019arXiv190108120K}.

Before stating our main result we first introduce some notations used throughout the paper. We will use $\Vert \cdot\Vert_{L^2}$ to denote
the norm of the complex Hilbert space
$L^2\big(\mathbb{R}^{2n}\big),$ and denote by $
C_0^\infty\inner{\mathbb{R}^{2n}}$ the set of smooth compactly
supported functions.    Denote  by  $\mathscr F_x$ the (partial) Fourier transform with respect to $x$ and by $\xi$ the  Fourier dual variable of $x$.  Throughout the paper we use the notation $\comi{\cdot}=\inner{1+\abs{\cdot}^2}^{1/2}$ and let $\comi{D_x}^r=\inner{1-\Delta_x}^{r/2}$  be the Fourier multiplier with symbol $\comi{\xi}^r$,  that is, 
$$\forall\ u\in C_0^\infty(\mathbb R^{2n}),\quad \mathscr F_x \inner{\comi{D_x}^r u}(\xi)=\comi{\xi}^r \mathscr F_x  u(\xi).$$
Similarly we can define $\comi{D_y}$.

\begin{theorem}\label{+Hypo}
Let $V(x) \in C^2(\mathbb{R}^n)$ with $n=2$ or $3$  be  a real-valued function and let $H(x)$ be a continuous    real vector-valued function. Suppose there exists a constant $C_0$ such   
that for any $x\in\mathbb R^n$ we have
\begin{equation}
	\label{eh}
	\abs{H(x)}\leq C_0  \comi{ \partial_x V(x)  }^{\delta}\  {\rm with}\  \delta<2/3,
\end{equation}
and
\begin{equation}\label{+HyP}
   \forall~\abs\alpha=2, \quad \abs{\partial_x ^\alpha V(x)}\leq C_0  \comi{\partial_x V(x) }^{s}
    ~~\,\,{\rm with~~s<\frac{4}{3}}.
\end{equation}
Then we can find a constant $C,$ depending on the above $C_0$ and $s$,  such that 
  \begin{equation}\label{weg}
\forall\ u\in
 C_0^\infty\big(\mathbb{R}^{2n}\big),\quad \norm{ \comi{\partial_x  V}^{2/3}u}_{L^2}    
   \leq C\Big\{\,
   \norm{P u}_{L^2}+\norm{u}_{L^2}\Big\}.
\end{equation}
Moreover if $H$ satisfies additionally that  $H \in C^1(\mathbb R^{n})$ and 
\begin{equation}\label{ead}
	\abs{\partial_xH(x)}\leq C_0  \comi{ \partial_x V(x)  }^{s/2} 
\end{equation}
with $s$ given in \eqref{+HyP}, 
then we have following subelliptic estimate 
  \begin{equation}\label{+++A1}
\forall\ u\in
 C_0^\infty\big(\mathbb{R}^{2n}\big),\quad     \norm{\comi{D_x}^{\tau}u}_{L^2}
   \leq C\Big\{\,
   \norm{P u}_{L^2}+\norm{u}_{L^2}\Big\},
\end{equation}
where
\begin{eqnarray*}
	\tau=\left\{
	\begin{aligned}
		&2/3, \quad {\rm if}\  s\leq 2/3,\\
		&(4-3s)/3, \quad {\rm if}\  2/3<s\leq 10/9,\\
		&(4-3s)/6, \quad {\rm if}\   10/9< s< 4/3.
	\end{aligned}
\right.
\end{eqnarray*}
\end{theorem}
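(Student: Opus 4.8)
The plan is to reduce the full operator $P$ to the Fokker-Planck operator $K$ with electric potential only, treating the magnetic term $H(x)\cdot(y\wedge\partial_y)$ perturbatively, and then to run a localization-and-commutator scheme adapted from the author's earlier work. First I would introduce a partition of unity $\{\chi_\nu\}_\nu$ subordinate to a covering of $\mathbb R^n_x$ by balls $B_\nu$ of radius comparable to $\langle\partial_x V\rangle^{-\rho}$ at scale around the center $x_\nu$, with the exponent $\rho$ chosen so that on each ball $\partial_x V$ is essentially frozen: precisely, using \eqref{+HyP} one gets $|\partial_x V(x)-\partial_x V(x_\nu)|\lesssim C_0\langle\partial_x V(x_\nu)\rangle^{s}\cdot\langle\partial_x V(x_\nu)\rangle^{-\rho}$, so one wants $\rho\ge s$ to make this a small multiple of $\langle\partial_x V(x_\nu)\rangle$. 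On such a ball the frozen model is the Kolmogorov operator with a constant force $b_\nu:=\partial_x V(x_\nu)$, for which the sharp estimate $\|\langle b_\nu\rangle^{2/3} u\|_{L^2}\lesssim \|K_{b_\nu}u\|_{L^2}+\|u\|_{L^2}$ is available (this is the classical $2/3$-gain, provable by Fourier transform in $x$ and studying the rescaled one-dimensional model $y\partial_x+b\cdot\partial_y-\partial_y^2+\dots$, or by quoting the constant-coefficient case of \cite{MR2130405,MR2034753}).

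Next I would sum the localized estimates. The main contributions to control are the commutator errors $[P,\chi_\nu]$. Since $\chi_\nu$ depends only on $x$, $[y\cdot\partial_x,\chi_\nu]=y\cdot(\partial_x\chi_\nu)$, which on the support of $\chi_\nu$ is bounded by $\langle b_\nu\rangle^{\rho}|y|$; this is where the $y$-weight must be re-absorbed by the harmonic-oscillator part $-\Delta_y+|y|^2/4$, costing a factor $\langle b_\nu\rangle^{\rho}$ against a gain of $\langle b_\nu\rangle^{2/3}$, so one needs $\rho<2/3$, hence the constraint $s\le\rho<2/3$, i.e. $s<2/3$ would be the naive threshold — the extra room up to $s<4/3$ comes from a more careful splitting where the commutator is measured in the subelliptic norm rather than in $L^2$, and from interpolating the $y$-gradient gain $\|\partial_y u\|$ and the $\langle y\rangle$-gain that the Kolmogorov structure also provides. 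The term $\partial_x V\cdot\partial_y$ contributes, after freezing, the genuine operator $b_\nu\cdot\partial_y$ which is part of $K_{b_\nu}$, plus an error $(\partial_x V-b_\nu)\cdot\partial_y\chi_\nu$-type term bounded using \eqref{+HyP} again; and the magnetic term contributes $|H(x)||y||\partial_y u|\lesssim\langle b_\nu\rangle^{\delta}|y||\partial_y u|$, which by \eqref{eh} with $\delta<2/3$ is subordinate to the same harmonic-oscillator-plus-$\partial_y$ gain and is therefore harmless — this is exactly why the hypothesis on $H$ in \eqref{eh} carries the exponent $2/3$. Collecting, one obtains \eqref{weg}.

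For the subelliptic estimate \eqref{+++A1}, I would feed the weighted estimate \eqref{weg} back into a commutator computation with the Fourier multiplier $\langle D_x\rangle^{\tau}$. The scheme is: write $\|\langle D_x\rangle^\tau u\|^2=\mathrm{Re}(\langle D_x\rangle^{2\tau}u,u)$ and move one copy of $\langle D_x\rangle^{2\tau}$ through $P$; the principal commutator $[\langle D_x\rangle^{2\tau}, y\cdot\partial_x]$ vanishes (both are $x$-translation-type), so the gain must be extracted from the bracket of $y\cdot\partial_x$ with the $y$-localization, via the classical trick that $[y\cdot\partial_x, \text{mult. by }\langle\xi\rangle^{2\tau}\text{-cutoff in }x]$ converts an $x$-derivative gain into a $y$-derivative loss, combined with the algebraic identity $\partial_{y_j}=$ (bracket of $\partial_{y_j}$ with $y_j\partial_{x_j}$ inside $P$), i.e. the Hörmander-bracket mechanism $\xi_j\leftrightarrow[\partial_{y_j}, y_j\partial_{x_j}]$ that yields one full $x$-derivative from two fields and hence the basic $1/3$ (Kolmogorov) gain, upgraded to $\tau$ by iterating against the weight from \eqref{weg}. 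Here the condition \eqref{ead} on $\partial_x H$ is what is needed to control the commutator of the magnetic term with the $x$-multiplier. The case distinction in the definition of $\tau$ reflects which term saturates: for $s\le 2/3$ the weighted estimate \eqref{weg} already dominates and one recovers the full $2/3$; for $2/3<s\le 10/9$ the dominant error is the first commutator and one loses linearly in $s$ to get $(4-3s)/3$; for $s>10/9$ a second-generation commutator (the bracket hitting the localization scale) becomes dominant and the loss doubles, giving $(4-3s)/6$. I expect the main obstacle to be the bookkeeping of these nested commutators against the $x$-dependent localization scale $\langle\partial_x V\rangle^{-\rho}$: one must track simultaneously the weight $\langle\partial_x V\rangle^{2/3}$, the $y$-powers, the $\partial_y$-powers, and the $\langle D_x\rangle^\tau$-gain, and verify that every error term is strictly subordinate under the stated ranges of $s$ and $\delta$ — in particular the borderline exponents $2/3$, $10/9$, $4/3$ must come out exactly right from the exponent arithmetic, leaving no slack.
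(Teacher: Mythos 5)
Your plan inverts the division of labor that makes the argument close, and the place where you wave your hands is precisely where the proof has to live. For the weighted estimate \eqref{weg} you propose to localize on balls of radius $\langle\partial_x V\rangle^{-\rho}$ and sum frozen $2/3$-estimates. As you yourself observe, the cut-off commutator $[y\cdot\partial_x,\chi_\nu]=y\cdot\partial_x\chi_\nu$ produces an error of size $\langle\partial_x V(x_\nu)\rangle^{\rho}\|\langle y\rangle\chi_\nu u\|$ and the freezing error $(\partial_x V-\partial_x V(x_\nu))\cdot\partial_y$ one of size $\langle\partial_x V(x_\nu)\rangle^{s-\rho}\|\partial_y\chi_\nu u\|$; paying for the extra $\langle y\rangle$ and $\partial_y$ forces you into weighted energy estimates of the type $\|L_j\langle\partial_x V\rangle^{\sigma}u\|\lesssim\|P\langle\partial_x V\rangle^{\sigma}u\|+\cdots$, whose own commutators carry the factor $\langle\partial_x V\rangle^{\sigma+s-1}\langle y\rangle$, and this bootstrap only closes for $s$ up to roughly $10/9$, not $4/3$. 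Your sentence ``the extra room up to $s<4/3$ comes from a more careful splitting \dots and from interpolating'' is exactly the missing idea, and it is not a splitting or an interpolation: the estimate \eqref{weg} is proved globally, with no localization at all, by pairing $Pu$ against $Mu$ for the multiplier $M=2\langle\partial_x V\rangle^{-2/3}\,\partial_x V\cdot y$. Since $\mathrm{Re}\,\langle Qu,Mu\rangle_{L^2}=\tfrac12\langle[M,Q]u,u\rangle_{L^2}$ and $\tfrac12[M,Q]=\langle\partial_x V\rangle^{-2/3}|\partial_x V|^2+O\bigl(\langle\partial_x V\rangle^{s-2/3}|y|^2\bigr)+O\bigl(\langle\partial_x V\rangle^{1/3+\delta}|y|\bigr)$, the main term is the weight $\langle\partial_x V\rangle^{4/3}$ itself, and the error exponents $(s-2/3)/2<1/3$ and $\delta-1/3<1/3$ are exactly what the hypotheses $s<4/3$ and $\delta<2/3$ buy, the $y$-weighted remainders being absorbed through $\|L_j\langle\partial_x V\rangle^{1/3}u\|$. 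Without this (or an equivalent) global device your first step does not reach the stated range of $s$.

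For the subelliptic estimate \eqref{+++A1} the global commutator with $\langle D_x\rangle^{2\tau}$ is not workable as stated: $[\partial_x V\cdot\partial_y,\langle D_x\rangle^{2\tau}]$ admits no useful symbolic calculus when $\partial_x^2V$ is unbounded, which is the entire content of \eqref{+HyP}. The localization you assigned to step one is what is needed here, on the metric $\langle\partial_x V\rangle^{s}|dx|^2$, combined with the frozen maximal-hypoellipticity bound $\|(1+|\partial_x V(x_\mu)|^2-\Delta_x)^{1/3}v\|_{L^2}\lesssim\|P_{x_\mu}v\|_{L^2}+\|v\|_{L^2}$. The three regimes of $\tau$ then arise not from ``first- versus second-generation commutators'' but from how much of the frozen weight must be sacrificed to sum the remainders $R_\mu$: these contain $\langle\partial_x V\rangle^{s/2}\langle y\rangle u$ and $\langle\partial_x V\rangle^{s/2}\partial_y u$, whose control requires $\|P\langle\partial_x V\rangle^{\tilde s}u\|_{L^2}$ with $\tilde s=0$, $s-2/3$, or $s/2$ according to the case, whence $\tau=2/3-\tilde s$; and this error analysis itself takes the already-proved weighted estimate as input. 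So the architecture must be global multiplier first, localization second; your sketch has them the other way around, and neither half is carried far enough to verify the thresholds $2/3$, $10/9$, $4/3$.
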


Note if the number $s$ in \eqref{+HyP} is less than or equal to $2/3$ then we obtain the sharp subelliptic exponent $\tau=2/3$.  This enables to obtain   the  maximal estimate stated as below (see Section \ref{sec22}).

\begin{corollary}\label{cormax}
	If  $V$ satisfies \eqref{+HyP} with $s\leq 2/3$, and $H$ satisfies the conditions \eqref{eh} and \eqref{ead} . Then we have the following maximal estimate, for any $u\in
 C_0^\infty\big(\mathbb{R}^{2n}\big)$,
	\begin{multline*}
		\norm{\comi{\partial_x  V}^{2/3}u}_{L^2}+\norm{\comi{D_x}^{2/3}u}_{L^2}+\norm{\big(y\cdot\partial_x-\partial_x V(x)\cdot
  \partial_y- H(x)\cdot \inner{y\wedge \partial_y}\big)u}_{L^2} \\
  +\norm{\comi{D_y}^2u}_{L^2}+\norm{\comi{y}^2u}_{L^2}\leq 	 C\Big\{\,
   \norm{P u}_{L^2}+\norm{u}_{L^2}\Big\}.
\end{multline*}
\end{corollary}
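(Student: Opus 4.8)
The plan is to deduce Corollary \ref{cormax} from the two estimates \eqref{weg} and \eqref{+++A1} of Theorem \ref{+Hypo}, specialized to the case $s\le 2/3$ where the subelliptic exponent is the sharp value $\tau=2/3$. So the first two terms on the left, $\norm{\comi{\partial_x V}^{2/3}u}_{L^2}$ and $\norm{\comi{D_x}^{2/3}u}_{L^2}$, are immediate: \eqref{weg} gives the former (it needs only \eqref{eh} and \eqref{+HyP}), and \eqref{+++A1} with $\tau=2/3$ gives the latter (it needs additionally \eqref{ead}, which is among the hypotheses of the corollary). It remains to bound the three ``elliptic in $y$'' contributions $\norm{\comi{D_y}^2u}_{L^2}$, $\norm{\comi{y}^2u}_{L^2}$ and the transport-type term $\norm{(y\cdot\partial_x-\partial_x V\cdot\partial_y-H\cdot(y\wedge\partial_y))u}_{L^2}$.

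First I would treat the velocity part. Write $P=X_0+\mathcal{O}_y$, where $X_0=y\cdot\partial_x-\partial_x V(x)\cdot\partial_y-H(x)\cdot(y\wedge\partial_y)$ is the skew-symmetric first-order (transport) part and $\mathcal{O}_y=-\triangle_y+\abs y^2/4-n/2$ is the harmonic-oscillator part in $y$, which is self-adjoint and nonnegative with $\mathcal{O}_y=\sum_{j}b_j^*b_j$ for the creation/annihilation operators $b_j=\partial_{y_j}+y_j/2$, $b_j^*=-\partial_{y_j}+y_j/2$. Since $X_0$ is formally skew-adjoint, $\mathrm{Re}\,(Pu,u)_{L^2}=(\mathcal{O}_yu,u)_{L^2}=\sum_j\norm{b_ju}_{L^2}^2$, whence the first-order control $\sum_j\norm{b_ju}_{L^2}^2\le \norm{Pu}_{L^2}\norm{u}_{L^2}\le \tfrac12(\norm{Pu}_{L^2}^2+\norm{u}_{L^2}^2)$. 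To upgrade to the second-order bounds $\norm{\comi{D_y}^2u}_{L^2}+\norm{\comi{y}^2u}_{L^2}\lesssim\norm{Pu}_{L^2}+\norm{u}_{L^2}$, I would use $\norm{\mathcal{O}_yu}_{L^2}$: one has $\norm{\comi{D_y}^2u}_{L^2}+\norm{\comi{y}^2u}_{L^2}\lesssim \norm{\mathcal{O}_yu}_{L^2}+\norm{u}_{L^2}$ by standard properties of the (shifted) harmonic oscillator, and $\norm{\mathcal{O}_yu}_{L^2}\le\norm{Pu}_{L^2}+\norm{X_0u}_{L^2}$, so the issue is reduced to controlling $\norm{X_0u}_{L^2}$. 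This is exactly the transport term appearing in the corollary, so in fact all three remaining terms collapse to a single bound: $\norm{X_0u}_{L^2}\lesssim\norm{Pu}_{L^2}+\norm{u}_{L^2}$.

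The main obstacle is therefore this bound on $\norm{X_0u}_{L^2}$, which is genuinely the hypoelliptic content rather than a soft manipulation. The idea is that $P^*=-X_0+\mathcal{O}_y$, so $Pu+P^*u=2\mathcal{O}_yu$ and $Pu-P^*u=2X_0u$, hence $\norm{X_0u}_{L^2}^2=\mathrm{Re}\,(Pu,X_0u)_{L^2}$ after using skew-symmetry of $X_0$ and integrating by parts; expanding, $\norm{X_0u}_{L^2}^2=\mathrm{Re}\,(Pu,X_0u)_{L^2}$ and one must absorb cross terms of the form $\mathrm{Re}\,(\mathcal{O}_yu,X_0u)_{L^2}=\mathrm{Re}\,([\mathcal{O}_y,X_0]u,u)_{L^2}/(\cdot)$ using the commutator. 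The commutator $[\mathcal{O}_y,X_0]$ produces terms like $\partial_x V\cdot b$ and $H\cdot(\text{rotations of }b)$, which are controlled by $\comi{\partial_x V}^{1}$-weighted and (via \eqref{eh}) $\comi{\partial_x V}^{\delta}$-weighted first-order quantities; since $\delta<2/3<1$ and since \eqref{weg} already furnishes $\norm{\comi{\partial_x V}^{2/3}u}_{L^2}\lesssim\norm{Pu}_{L^2}+\norm{u}_{L^2}$, these are absorbable once one also invokes the $\norm{\comi{D_x}^{2/3}u}_{L^2}$ bound to handle the $y\cdot\partial_x$ part of $X_0$ after a further commutation with $\comi{D_y}$. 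Concretely, I would estimate $\norm{(y\cdot\partial_x)u}_{L^2}\le\norm{Pu}_{L^2}+\norm{(\partial_x V\cdot\partial_y)u}_{L^2}+\norm{(H\cdot(y\wedge\partial_y))u}_{L^2}+\norm{\mathcal{O}_yu}_{L^2}$ and then bound the potential/magnetic terms by $\norm{\comi{\partial_x V}^{2/3}\comi{D_y}u}_{L^2}$-type quantities, closing the estimate by interpolation between the velocity-regularity gain ($b_ju\in L^2$, $\mathcal{O}_yu\in L^2$) and the already-established weighted bound \eqref{weg}; the hypotheses $\delta<2/3$ and $s\le 2/3$ are precisely what make the exponent bookkeeping close. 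I expect the delicate point to be tracking that every error term carries a power of $\comi{\partial_x V}$ strictly below $2/3$ (or is otherwise dominated after interpolation), so that \eqref{weg} and \eqref{+++A1} with $\tau=2/3$ suffice to absorb it; the rest is routine integration by parts with the harmonic-oscillator calculus.
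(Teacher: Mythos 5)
Your reduction of the corollary to (i) the two estimates \eqref{weg} and \eqref{+++A1} with $\tau=2/3$ and (ii) a single bound $\norm{X_0u}_{L^2}\leq C(\norm{Pu}_{L^2}+\norm{u}_{L^2})$ for the whole transport field $X_0=Q$ is sound, and your first route to (ii) — the identity $\norm{X_0u}_{L^2}^2={\rm Re}\comi{Pu,X_0u}_{L^2}-\tfrac12{\rm Re}\comi{[\mathcal O_y,X_0]u,u}_{L^2}$ with $\mathcal O_y=\sum_jL_j^*L_j$ — does close. It is essentially the paper's argument run in the dual direction: the paper estimates $\sum_j\norm{L_jL_j^*u}_{L^2}^2$ via the commutators $[P,L_j^*]$ and then recovers $Qu=Pu-\mathcal O_yu$, whereas you estimate $X_0u$ first and recover $\mathcal O_yu=Pu-X_0u$. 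In both versions the decisive cross term is the same, namely a pairing of $\partial_{y_j}u$ against $\partial_{x_j}u$ (equivalently $\norm{L_j\comi{D_x}^{1/3}u}_{L^2}^2$), which is handled by splitting the $x$-derivative as $\comi{D_x}^{1/3}\cdot\comi{D_x}^{2/3}$, invoking \eqref{+++A1} with the sharp exponent $\tau=2/3$, and absorbing an $\eps\norm{\mathcal O_yu}_{L^2}^2$ (resp.\ $\eps\norm{L_jL_j^*u}_{L^2}^2$) term; the term $\tfrac12\partial_xV\cdot y$ in $[\mathcal O_y,X_0]$ is handled by \eqref{weg} and \eqref{lje}. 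A small bonus of your formulation is that the magnetic part of $X_0$ commutes with $\mathcal O_y$ exactly (rotations commute with the harmonic oscillator), so no $H$-dependent commutator terms appear at this stage.

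The genuine problem is your ``concrete'' implementation, which replaces this commutator identity by the term-by-term triangle inequality $\norm{(y\cdot\partial_x)u}\leq\norm{Pu}+\norm{(\partial_xV\cdot\partial_y)u}+\norm{(H\cdot(y\wedge\partial_y))u}+\norm{\mathcal O_yu}$ together with a bound of $\norm{(\partial_xV\cdot\partial_y)u}_{L^2}$ by ``$\norm{\comi{\partial_xV}^{2/3}\comi{D_y}u}_{L^2}$-type quantities.'' This step fails for two reasons. First, $\abs{\partial_xV}$ is of size $\comi{\partial_xV}^{1}$, not $\comi{\partial_xV}^{2/3}$. Second, and more fundamentally, the a priori information available under the hypotheses is exactly the graded family $\comi{\partial_xV}^{2/3}u$, $\comi{\partial_xV}^{1/3}L_ju$, $L_jL_k^{(*)}u$ in $L^2$ (see \eqref{weg}, \eqref{lje} and the oscillator bound); a first-order $y$-derivative weighted by a full power $\comi{\partial_xV}^{1}$ sits strictly above this scale and is \emph{not} controlled by $\norm{Pu}_{L^2}+\norm{u}_{L^2}$ in general — no interpolation between $\comi{\partial_xV}^{2/3}u$ and $\comi{D_y}^2u$ produces it. This is precisely why the maximal estimate in the corollary (and in Helffer--Karaki) controls the transport operator $Q$ only as a whole and not its three summands separately. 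So you should discard the splitting and carry out the proof through the commutator identity you stated first (or, equivalently, through the paper's estimate of $\norm{L_jL_j^*u}_{L^2}$), where every error term carries admissible weights.
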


\begin{remark}  The above maximal estimate was established  by Helffer-Karaki \cite{2020arXiv201016175H} under  similar  conditions as that in \eqref{eh},\eqref{+HyP}  and \eqref{ead}, but therein they require $\delta=0$ and $s<2/3$. 
  The result in Theorem \ref{+Hypo} generalizes  the one  established by the first author  \cite{MR3060700},  considered therein is a specific case  of $H\equiv0$. 
\end{remark}

  Another consequence of Theorem \ref{+Hypo} is to  
 analyze the compact criteria for resolvent of Fokker-Planck operator $P$ in \eqref{FPE}.  Due to the weighted estimate \eqref{+HyP} we see  the Fokker-Planck operator $P$ admits a compact resolve  if  $\abs{\partial_xV(x)}\rightarrow+\infty$ as $\abs{x}\rightarrow +\infty$. Moreover  as in the purely electric case (i.e., $H\equiv 0$),  $P$ is closed linked with Witten Laplace operator $\triangle_{V/2}^{(0)}$ defined by
\[
  \triangle_{V/2}^{(0)}=
  -\triangle_x+\frac{1}{4}\abs{\partial_xV(x)}^2-\frac{1}{2}\triangle_x
  V(x).
\]
In fact we can repeat the argument for proving \cite[Corollary 1.3]{MR3060700} to conclude the following 
 
 \begin{corollary} 
  Let $H(x)$ and $V(x)$ satisfy the conditions \eqref{eh}, \eqref{+HyP} and  \eqref{ead}.
  Then the Fokker-Planck operator $P$   in \eqref{FPE} has a compact resolvent if the Witten
  Laplacian $\triangle_{V/2}^{(0)}$ has a compact resolvent.
\end{corollary}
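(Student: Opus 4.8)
The plan is to follow the scheme of \cite[Corollary 1.3]{MR3060700}, with the estimates used there replaced by those of Theorem \ref{+Hypo}; I indicate only the main points. The first step is to upgrade Theorem \ref{+Hypo} to a coercive estimate for the whole of $P$. The three first-order terms $y\cdot\partial_x$, $\partial_x V(x)\cdot\partial_y$ and $H(x)\cdot\inner{y\wedge\partial_y}$ in \eqref{FPE} are formally skew-adjoint on $L^2(\mathbb{R}^{2n})$ — the first because $y$ is $x$-independent, the other two because $\partial_x V(x)$ and $H(x)$ are $y$-independent while the rotation field $y\wedge\partial_y$ has vanishing divergence in $y$ — so that for $u\in C_0^\infty(\mathbb{R}^{2n})$,
\[
  \mathrm{Re}\,(Pu,u)_{L^2}=\norm{\nabla_y u}_{L^2}^2+\tfrac14\norm{\abs{y}\,u}_{L^2}^2-\tfrac n2\norm{u}_{L^2}^2 ,
\]
which gives $\norm{\comi{D_y}u}_{L^2}+\norm{\comi{y}u}_{L^2}\leq C\inner{\norm{Pu}_{L^2}+\norm{u}_{L^2}}$. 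Since the exponent $\tau$ in \eqref{+++A1} is strictly positive for every admissible $s<4/3$, combining this with \eqref{weg} and \eqref{+++A1} yields a constant $C$ such that
\begin{equation}\label{coerP}
  \norm{\comi{\partial_x V}^{2/3}u}_{L^2}+\norm{\comi{D_x}^{\tau}u}_{L^2}+\norm{\comi{y}u}_{L^2}+\norm{\comi{D_y}u}_{L^2}\leq C\inner{\norm{Pu}_{L^2}+\norm{u}_{L^2}}
\end{equation}
for all $u\in C_0^\infty(\mathbb{R}^{2n})$. As in \cite{MR2130405,MR3060700}, the operator $P$ realized as the closure of its restriction to $C_0^\infty(\mathbb{R}^{2n})$ is maximally accretive, so $P+C$ is boundedly invertible on $L^2(\mathbb{R}^{2n})$ for $C$ large, and \eqref{coerP} passes to $D(P)$ by density.

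The second step is to turn $\comi{\partial_x V}^{2/3}$ into a weight that genuinely tends to infinity, and this is where the hypothesis on $\triangle_{V/2}^{(0)}$ enters. Because of \eqref{+HyP} with $s<4/3<2$, the potential $\tfrac14\abs{\partial_x V}^2-\tfrac12\triangle_x V$ of $\triangle_{V/2}^{(0)}$ is bounded from below, and a short bootstrap using \eqref{+HyP} shows that if $\abs{\partial_x V(x_0)}\leq M$ then $\abs{\partial_x V}\leq 2M$, and hence $\abs{\triangle_x V}\leq C(M)$, on a ball $B(x_0,r)$ with $r=r(M)>0$. Consequently, if $\abs{\partial_x V(x)}$ remained bounded along a sequence $\abs{x_k}\to\infty$, the $L^2(\mathbb{R}^n)$-normalized functions $\varphi(\cdot-x_k)$ with $\varphi\in C_0^\infty(B(0,r))$ would be bounded in the form domain of $\triangle_{V/2}^{(0)}$ and converge weakly but not strongly to $0$ in $L^2(\mathbb{R}^n)$, contradicting the compactness of the resolvent of $\triangle_{V/2}^{(0)}$. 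Hence $\comi{\partial_x V(x)}\to+\infty$ as $\abs{x}\to\infty$ — this is the familiar description of the compactness of the resolvent of a Witten Laplacian whose Hessian is controlled by its gradient, see \cite{MR2130405} — and in particular $\comi{\partial_x V(x)}^{2/3}\to+\infty$.

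The last step is a Rellich-type argument. Since $\comi{\partial_x V(x)}^{2/3}+\comi{y}\to+\infty$ as $\abs{(x,y)}\to\infty$, while the symbol $\comi{\xi}^{\tau}+\comi{\eta}$ (with $\xi,\eta$ dual to $x,y$) tends to $+\infty$ as $\abs{(\xi,\eta)}\to\infty$, the set of $u\in L^2(\mathbb{R}^{2n})$ for which the left-hand side of \eqref{coerP} is at most $1$ is relatively compact in $L^2(\mathbb{R}^{2n})$: one gets tightness of $u$ in the variables $(x,y)$ from the first pair of weights and tightness of its Fourier transform from the Fourier multipliers, and concludes by the Fr\'echet--Kolmogorov criterion. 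By \eqref{coerP}, $(P+C)^{-1}$ maps the unit ball of $L^2(\mathbb{R}^{2n})$ into a relatively compact set and is therefore compact; that is, $P$ has a compact resolvent.

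The two inputs that carry the content are thus that Theorem \ref{+Hypo} supplies a positive subelliptic gain $\tau>0$ in the $x$-direction together with the weight $\comi{\partial_x V}^{2/3}$, and that under \eqref{+HyP} the compactness of the resolvent of $\triangle_{V/2}^{(0)}$ forces $\comi{\partial_x V}\to+\infty$; these are the steps I would expect to require the most care. Everything else — the maximally accretive realization of $P$, the passage of \eqref{coerP} to $D(P)$, and the Rellich lemma — goes exactly as in the $H\equiv0$ case treated in \cite{MR2130405,MR3060700}, the electromagnetic term $H(x)\cdot\inner{y\wedge\partial_y}$ causing no trouble since it is skew-adjoint (hence invisible in the real-part identity) and is otherwise absorbed by Theorem \ref{+Hypo}.
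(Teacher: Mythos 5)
Your proposal is correct and follows essentially the same route as the paper, which itself gives no independent proof but simply invokes the argument of \cite[Corollary 1.3]{MR3060700}: combine the weighted and subelliptic estimates of Theorem \ref{+Hypo} (together with the elementary coercivity in $(y,D_y)$ from \eqref{elv}--\eqref{uy}) into a global coercive bound, use \eqref{+HyP} to show that compactness of the resolvent of $\triangle_{V/2}^{(0)}$ forces $\abs{\partial_x V(x)}\to+\infty$, and conclude by a Rellich-type compactness argument. Your reconstruction of that argument, including the bootstrap showing $\abs{\partial_x V}$ stays comparable to $\abs{\partial_x V(x_0)}$ on balls of radius $r(M)$ and the observation that the skew-adjoint term $H(x)\cdot\inner{y\wedge\partial_y}$ drops out of the real-part identity, is accurate.
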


The paper is organized as follow. In Sections \ref{sec3} and \ref{sec4} we  prove, respectively, the weighted estimate and the subelliptic estimate in Theorem \ref{+Hypo}.  The last section is devoted to proving Corollary \ref{cormax}, the maximal estimate.

\section{Weighted estimate}\label{sec3}

In this part we prove the   weighted estimate    \eqref{weg} in Theorem \ref{+Hypo}.  From now on we use the notation $Q=y \cdot \partial_x-\partial_x V(x)\cdot
 \partial_y-H(x)\cdot\inner{y\wedge \partial_y }$ and $L_j=\partial_{y_j}+\frac{y_j}{2},j=1,\cdots n.$  Then we can
 rewrite the Fokker-Planck operator $P$  in \eqref{FPE}  as
\begin{equation}\label{+FP+}
  P=Q+\sum_{j=1}^nL_j^*L_j.
\end{equation}
By direct verification the following estimates
\begin{equation}\label{elv}
 \sum_{1\leq j\leq n} \norm{L_j u}_{L^2}^2\leq  {\rm Re} \comi{P u,\ u}_{L^2}   
\end{equation}
and
\begin{equation}\label{uy}
 \norm{\comi{y}u}_{L^2}+\norm{\comi{D_y}u}_{L^2}  \leq C\sum_{1\leq j\leq n}\norm{L_j u}_{L^2}+C\norm{ u}_{L^2}
\end{equation}
	hold for any $u\in C_0^\infty\big(\mathbb{R}^{2n}\big)$.
 Here and below to simplify the notation  we will use   the
capital letter $C$ to denote different suitable constants.

\begin{lemma}\label{lewe}
Let $H$ and $V$ satisfy \eqref{eh} and \eqref{+HyP} respectively. Then
for any $\eps>0$ we can find a constant $C_\eps$ such that
the following estimate
	\begin{equation*} 
 \sum_{j=1}^n \inner{\norm{L_j\comi{\partial_xV }^{{1/3}}
  u}_{L^2}^2+\norm{L_j\comi{y} 
  u}_{L^2}^2}\leq \eps \norm{\comi{\partial_xV }^{{2/3}}
  u}_{L^2}^2+C_\eps\set{\norm{Pu}_{L^2}^2+\norm{u}_{L^2}^2}
\end{equation*}
holds for all $u\in C_0^\infty\big(\mathbb{R}^{2n}\big)$.
\end{lemma}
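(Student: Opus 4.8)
The plan is to estimate the two families of terms $\sum_j\|L_j\comi{\partial_xV}^{1/3}u\|_{L^2}^2$ and $\sum_j\|L_j\comi{y}u\|_{L^2}^2$ by commuting the weights $\comi{\partial_xV}^{1/3}$ and $\comi{y}$ past the operator $P$, and then exploiting the basic a priori bound \eqref{elv} together with the control \eqref{uy} of $\comi{D_y}u$ and $\comi{y}u$ by $\mathrm{Re}\comi{Pu,u}_{L^2}$. Write $w=\comi{\partial_xV}^{1/3}$ (a function of $x$ only); the key observation is that $L_j$ involves only $y$-derivatives, so $L_j$ commutes with $w$, whereas the commutator of $w$ with $P$ is generated by the transport part $Q$, specifically by $y\cdot\partial_x$. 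Concretely, $[y\cdot\partial_x,\,w]=(y\cdot\partial_x w)=\tfrac13\comi{\partial_xV}^{-5/3}\big(\partial_x V\cdot\partial_x^2 V\big)\cdot y$, and hypothesis \eqref{+HyP} gives $|\partial_x^2 V|\le C_0\comi{\partial_x V}^s$, so this commutator is bounded by $C\comi{\partial_xV}^{(s-4)/3}\comi{y}\,|\partial_x V|\le C\comi{\partial_xV}^{(s-1)/3}\comi{y}$. Since $s<4/3$, the exponent $(s-1)/3<1/9<2/3$, which is precisely the gain that makes the argument close.

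First I would insert $w^2$ (or conjugate by $w$) into the identity \eqref{elv}: from $P=Q+\sum L_j^*L_j$ one computes
$\mathrm{Re}\comi{Pu,\,w^2u}_{L^2}=\sum_j\|L_j(wu)\|_{L^2}^2+(\text{commutator terms})$,
where the commutator terms come from $[\sum L_j^*L_j,\,w^2]$ — which vanishes since $w$ is $y$-independent — and from $\mathrm{Re}\comi{Qu,w^2u}_{L^2}$. For the transport term, $Q$ is skew-adjoint modulo the term $H\cdot(y\wedge\partial_y)$ being handled separately, so $\mathrm{Re}\comi{Qu,w^2u}=\tfrac12\comi{[Q,w^2]u,u}$ up to the $H$-contribution; the magnetic term $H\cdot(y\wedge\partial_y)$ acts only in $y$ and commutes with $w$, and its skew-adjoint part contributes nothing to the real part, while its lower-order correction is controlled using \eqref{eh} (here $\delta<2/3$ ensures $|H|\le C\comi{\partial_x V}^\delta$ is dominated by $\comi{\partial_xV}^{2/3}$ after absorbing a $\comi{y}$, using \eqref{uy}). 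The dominant term is then $\comi{[y\cdot\partial_x,w^2]u,u}_{L^2}$, which by the computation above is bounded by $C\comi{\comi{\partial_xV}^{(s-1)/3}\comi{y}|u|,\,|u|}_{L^2}$. Using Cauchy–Schwarz and Young's inequality with a small parameter, and the fact that $(s-1)/3<2/3$ combined with \eqref{uy} to absorb $\comi{y}u$, this is $\le\eps\|\comi{\partial_xV}^{2/3}u\|_{L^2}^2+C_\eps\{\|Pu\|_{L^2}^2+\|u\|_{L^2}^2\}$. This bounds $\sum_j\|L_j(wu)\|_{L^2}^2=\sum_j\|L_j\comi{\partial_xV}^{1/3}u\|_{L^2}^2$.

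For the second family, $\sum_j\|L_j\comi{y}u\|_{L^2}^2$, I would proceed analogously but now the weight $\comi{y}$ does not commute with $L_j$; however $[L_j,\comi{y}]=(\partial_{y_j}\comi{y})=y_j\comi{y}^{-1}$ is bounded, so $\|L_j\comi{y}u\|_{L^2}\le\|\comi{y}L_ju\|_{L^2}+\|u\|_{L^2}$, and it suffices to bound $\sum_j\|\comi{y}L_ju\|_{L^2}^2$. For this I would conjugate \eqref{elv}-type identity by $\comi{y}$: the commutator $[\sum L_j^*L_j,\comi{y}^2]$ is now a first-order operator in $y$ with bounded coefficients, hence absorbed by $\sum\|L_ju\|_{L^2}^2+\|\comi{y}u\|_{L^2}^2\le C\,\mathrm{Re}\comi{Pu,u}+C\|u\|^2$ via \eqref{elv}–\eqref{uy}; the commutator $[y\cdot\partial_x,\comi{y}^2]=0$ since $\comi{y}$ is $x$-independent; and the commutator $[\partial_xV\cdot\partial_y,\comi{y}^2]=2(\partial_xV\cdot y)$, which produces exactly a term of size $|\partial_xV|\comi{y}|u|^2$ — this is the one genuinely dangerous term, and it is controlled by $\eps\|\comi{\partial_xV}^{2/3}u\|^2+C_\eps\|\comi{\partial_xV}^{1/3}\comi{y}u\|^2$ after splitting $|\partial_x V|=\comi{\partial_xV}^{2/3}\cdot\comi{\partial_xV}^{1/3}$, and then $\|\comi{\partial_xV}^{1/3}\comi{y}u\|$ is bounded using the commutation $[\comi{\partial_xV}^{1/3},\comi{y}]=0$ and \eqref{uy} applied to $\comi{\partial_xV}^{1/3}u$... which circularly needs the first bound — so in practice both families must be estimated simultaneously, collecting all terms on the left and absorbing. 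The magnetic term $[H\cdot(y\wedge\partial_y),\comi{y}^2]=0$ since $y\wedge\partial_y$ is tangent to the spheres $|y|=\text{const}$, which is a convenient simplification. The main obstacle, as this indicates, is the bookkeeping: one must set up a single inequality incorporating both weighted quantities, track the precise powers of $\comi{\partial_xV}$ produced by each commutator (verifying each stays strictly below $2/3$, which is where the hypotheses $\delta<2/3$ and $s<4/3$ enter sharply), and absorb the cross terms, rather than any single estimate being hard in isolation.
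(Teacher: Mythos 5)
Your proposal is correct and follows essentially the same route as the paper: conjugate $P$ by the weights $\comi{\partial_xV}^{1/3}$ and $\comi{y}$, reduce to the basic estimates \eqref{elv}--\eqref{uy}, and absorb the commutators generated by $y\cdot\partial_x$, $\partial_xV\cdot\partial_y$ and the magnetic term via \eqref{+HyP} and \eqref{eh} (the paper runs the two bounds sequentially rather than simultaneously, which removes the circularity you flag). One arithmetic slip worth fixing: $\abs{y\cdot\partial_x\comi{\partial_xV}^{1/3}}\leq C\comi{\partial_xV}^{-5/3+1+s}\comi{y}=C\comi{\partial_xV}^{s-2/3}\comi{y}$, not $C\comi{\partial_xV}^{(s-1)/3}\comi{y}$; the exponent $s-2/3$ lies below $2/3$ precisely when $s<4/3$, so this is where the hypothesis enters sharply --- consistent with your closing remark, but not with the claimed margin $(s-1)/3<1/9$.
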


\begin{proof}[Proof of this lemma]
	we use \eqref{elv} to get 
\begin{multline*}
 \sum_{1\leq j\leq n} \norm{L_{j}\comi{\partial_xV(x)}^{1/3}u}_{L^2}^2
   \leq {\rm Re}\comi{P\comi{\partial_xV(x)}^{1/3}u,~\comi{\partial_xV(x)}^{1/3}u}_{L^2}\\
   \leq \eps\norm{\comi{\partial_xV(x)}^{2/3}u}_{L^2}^2+C_\eps  \norm{Pu}_{L^2}^2+
  {\rm Re}\comi{\big[P,~\comi{\partial_xV(x)}^{1/3}\big]u,~\comi{\partial_xV(x)}^{1/3}u}_{L^2}.
\end{multline*}
Moreover using \eqref{+HyP} yields, for any $\eps_1, \eps>0,$
\begin{eqnarray*}
	&&{\rm Re}\comi{\big[P,~\comi{\partial_xV(x)}^{1/3}\big]u,~\comi{\partial_xV(x)}^{1/3}u}_{L^2}\\
	&& \leq
 \eps_1\norm{\comi{y}\comi{\partial_xV(x)}^{1/3}u}_{L^2}^2+C_{\eps_1}\norm{\comi{\partial_xV(x)}^{s-{2/3}}u}_{L^2}^2\\
 && \leq \frac{1}{2} \sum_{j=1}^n
\norm{L_j \comi{\partial_xV(x)}^{1/3}u}_{L^2}^2+\eps \norm{\comi{\partial_xV(x)}^{{2/3}}u}_{L^2}^2+C_{\eps}\norm{u}_{L^2}^2,
\end{eqnarray*}
the last inequality using \eqref{uy} and the fact that $s<4/3.$ Combining the above estimates we obtain
\begin{equation}\label{fre}
	\sum_{j=1}^n  \norm{L_j\comi{\partial_xV(x)}^{{1/3}}
  u}_{L^2}^2 \leq \eps \norm{\comi{\partial_xV(x)}^{{2/3}}
  u}_{L^2}^2+C_\eps\big\{\norm{Pu}_{L^2}^2+\norm{u}_{L^2}^2\big\}
\end{equation}
Similarly, using again \eqref{elv} and \eqref{uy},
\begin{eqnarray*}
\begin{aligned}
 & \sum_{1\leq j\leq n} \norm{L_j\comi{y}u}_{L^2}^2
  \leq    {\rm Re}\comi{P\comi{y}u,~\comi{y}u}_{L^2}   \leq  \abs{\comi{\com{P,~\comi{y}}u,~\comi{y}u}_{L^2}}+ \norm{Pu}_{L^2} \norm{\comi{y}^{2}u}_{L^2}\\
  &\leq \eps_1 \sum_{1\leq j\leq n} \norm{L_j\comi{y}u}_{L^2}^2+C_{\eps_1}\Big\{\norm{Pu}_{L^2}^2+\norm{u}_{L^2}^2\Big\} +\abs{\comi{\com{P,~\comi{y}}u,~\comi{y}u}_{L^2}},
  \end{aligned}
\end{eqnarray*}
with $\eps_1>0$ arbitrarily small.  Moreover it follows from the assumption \eqref{eh} that,  for any $\eps,\eps_1>0,$
\begin{eqnarray*}
\begin{aligned}
& \abs{\comi{\com{P,~\comi{y}}u,~\comi{y}u}_{L^2}}  \leq   C\norm{\comi{\partial_xV }^{2/3}u}_{L^2} \norm{\comi{y}\comi{\partial_xV }^{1/3}u}_{L^2}\\
  &\qquad\qquad+C
  \norm{\comi{y}^2 u}_{L^2} \norm{\comi{\partial_xV }^{\delta}u}_{L^2}+C
  \norm{\comi{y} u}_{L^2} \norm{\comi{D_y}u}_{L^2}   \\
 & \leq \eps \norm{\comi{\partial_xV }^{2/3}u}_{L^2}^2+\eps_1 \sum_{1\leq j\leq n} \norm{L_j\comi{y}u}_{L^2}^2\\
 &\quad+C_{\eps,\eps_1}\set{\sum_{1\leq j\leq n}\norm{L_j\comi{\partial_xV }^{1/3}u}_{L^2}^2+
  \norm{Pu}_{L^2}^2+\norm{u}_{L^2}^2},
  \end{aligned}
\end{eqnarray*}
where the last inequality holds because of \eqref{uy} and the fact that $\delta<2/3$. As a result combining the above estimates and choosing $\eps_1$ small enough, we conclude
\begin{eqnarray*}
\begin{aligned}
	\sum_{1\leq j\leq n} \norm{L_j\comi{y}u}_{L^2}^2&\leq \eps \norm{\comi{\partial_xV }^{2\over3}u}_{L^2}^2 +C_{\eps }\set{\sum_{1\leq j\leq n}\norm{L_j\comi{\partial_xV }^{1\over3}u}_{L^2}^2+
  \norm{Pu}_{L^2}^2+\norm{u}_{L^2}^2}\\
&  \leq \eps \norm{\comi{\partial_xV }^{2\over3}u}_{L^2}^2 +C_{\eps }\set{ 
  \norm{Pu}_{L^2}^2+\norm{u}_{L^2}^2},
  \end{aligned}
\end{eqnarray*}
the last inequality using \eqref{fre}.  This with \eqref{fre} completes the proof  of Lemma \ref{lewe}.
\end{proof}

\begin{proof}[Proof of Theorem \ref{+Hypo}: weighted estimate]  Here we will prove the weighted estimate \eqref{weg} in Theorem \ref{+Hypo}.   Let $M\in
C^1\big(\mathbb{R}^{2n}\big)$ be a real-valued function given by
\[
  M=M(x,y)= 2\comi{\partial_xV(x)}^{-{2/3}}\partial_x V(x)\cdot y.
\]
We use the fact that   $ \abs {M(x,y)} \leq
  C \abs{y}\comi{\partial_xV(x)}^{1/3}$ and
\[
  {\rm Re} \comi{Pu,~Mu}_{L^2}={\rm Re}
  \comi{Qu,~Mu}_{L^2}+
  {\rm Re} \sum_{j=1}^n\comi{L_j^*L_ju,~Mu}_{L^2}
\] 
due to \eqref{+FP+}, to conclude,  by virtue of \eqref{uy},
\begin{equation}\label{rmain}
\begin{aligned}
  &{\rm Re}
  \comi{Qu,~Mu}_{L^2}\leq
  \norm{Pu}_{L^2}^2+C\norm{ \comi{y}\comi{\partial_xV}^{{1/3}} u}_{L^2}^2+\sum_{j=1}^n\abs{\comi{L_j^*L_ju,~Mu}_{L^2}}\\
  &\leq  \norm{Pu}_{L^2}^2  +C\sum_{1\leq j\leq n} \inner{\norm{L_j\comi{\partial_xV }^{ {1\over3}} u}_{L^2}+\norm{L_j\comi{y } u}_{L^2}+\norm{\comi{\partial_xV}^{1\over 3}u}_{L^2}^2}\\
  &\leq  \eps\norm{\comi{\partial_xV}^{2/3}u}_{L^2}^2+C_\eps\inner{ \norm{Pu}_{L^2}^2 + \norm{u}_{L^2}^2 },
  \end{aligned}
\end{equation}
where  we use Lemma \ref{lewe} in the last line.
As for the term on left side we 
use the fact that 
${\rm Re}\comi{Q u,~Mu}_{L^2}= \frac{1}{2}\comi{[M,\ Q] u,~u}_{L^2}$
and 
\begin{eqnarray*}
	\frac{1}{2}[M,\ Q] =-y\cdot\partial_x\big(\comi{\partial_xV }^{-{2/3}}\partial_x V \cdot
  y\big)+ \comi{\partial_xV }^{-{2/3}}\abs{\partial_x
  V }^2 
  +\comi{\partial_xV }^{-{2/3}}H \cdot\inner{y\wedge \partial_xV }
\end{eqnarray*}
to compute, using \eqref{eh} and \eqref{+HyP} as well as Lemma \ref{lewe}, 
 \begin{eqnarray*}
  \norm{\comi{\partial_xV }^{{2/3}}u}_{L^2}^2&\leq& {\rm Re}\comi{Q u, Mu}_{L^2}+ 
  \norm{u}_{L^2}^2+\eps   \norm{\comi{\partial_xV }^{{2/3}}u}_{L^2}^2+C_\eps \norm{\comi{y} \comi{\partial_xV }^{ {1/3}} u}_{L^2}\\
  &\leq& {\rm Re}\comi{Q u, Mu}_{L^2}+ \eps \norm{\comi{\partial_xV(x)}^{{2/3}}
  u}_{L^2}^2+C_\eps\big\{\norm{Pu}_{L^2}^2+\norm{u}_{L^2}^2\big\},
\end{eqnarray*}
and thus,  letting $\eps=1/2$ above,
\begin{eqnarray*}
  \norm{\comi{\partial_xV }^{{2/3}}u}_{L^2}^2\leq  2 {\rm Re}\comi{Q u, Mu}_{L^2}+ C\big\{\norm{Pu}_{L^2}^2+\norm{u}_{L^2}^2\big\}.
\end{eqnarray*}
As a result, we combine   the above estimate with  \eqref{rmain} to get
\begin{eqnarray*}
  \norm{\comi{\partial_xV }^{{2/3}}u}_{L^2}^2\leq \eps\norm{\comi{\partial_xV}^{2/3}u}_{L^2}^2+C_\eps\set{ \norm{Pu}_{L^2}^2 + \norm{u}_{L^2}^2 },
\end{eqnarray*}
which  gives the desired weighted  estimate \eqref{weg} if we let $\eps$ be small enough. 
\end{proof}

 As an immediate consequence of Lemma \ref{lewe} and the weighted estimate \eqref{weg} we see the estimate 
 \begin{equation} \label{lje} 
 \sum_{j=1}^n \inner{\norm{L_j\comi{\partial_xV }^{{1/3}}
  u}_{L^2}^2+\norm{L_j\comi{y} 
  u}_{L^2}^2}\leq C \set{\norm{Pu}_{L^2}^2+\norm{u}_{L^2}^2}
\end{equation}
holds for all $u\in C_0^\infty\big(\mathbb{R}^{2n}\big)$, provided 
   $H$ and $V$ satisfy \eqref{eh} and \eqref{+HyP} respectively.

  \section{Subelliptic estimate}
\label{sec4}

In this section we will prove the subelliptic estimate   \eqref{+++A1} in Theorem \ref{+Hypo}.   The proof relies on
a localization argument. 
Firstly we recall some standard results concerning the partition
of unity.  For more detail we refer to \cite{Hormander85, MR2599384} for
instance. Let $g$ be a metric of the following form
\begin{equation}\label{7292}
  g_x=\comi{\partial_xV(x)}^{s}\abs{dx}^2,\quad x\in\mathbb{R}^n,
\end{equation}
where $s$ is  the real number given in \eqref{+HyP}.

\begin{lemma}[Lemma 4.2 in \cite{MR3060700}]\label{lem729}
  Suppose $V$ satisfies the condition \eqref{+HyP}. Then the metric
  $g$ defined by \eqref{7292} is slowly varying,  i.e., we can find two constants $C_*,r>0$ such
  that if $g_x(x-\tilde x)\leq r^2$ then $$C_*^{-1}\leq \frac{g_x}{g_{\tilde x}}\leq
  C_*.$$
\end{lemma}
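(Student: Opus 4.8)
The plan is to reduce the slow-variation property to a two-sided comparison of the weight $m(x):=\comi{\p V(x)}$, and then to establish that comparison by a continuity (bootstrap) argument along the segment joining $x$ to $\tilde x$. Since
\[
  \frac{g_x}{g_{\tilde x}}=\inner{\frac{m(x)}{m(\tilde x)}}^{s}\quad\text{and}\quad g_x(x-\tilde x)\leq r^2\ \Longleftrightarrow\ \abs{x-\tilde x}\leq r\,m(x)^{-s/2},
\]
it suffices to find $r>0$, depending only on the constant $C_0$ in \eqref{+HyP} and on $s$, such that $\abs{x-\tilde x}\leq r\, m(x)^{-s/2}$ forces $\tfrac12\, m(x)\leq m(\tilde x)\leq 2\, m(x)$; one may then take $C_*$ to be a suitable power of $2$. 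Recall also that $m\geq 1$ everywhere and that $s/2<2/3<1$, facts that will be used repeatedly.

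Next I would fix such $x,\tilde x$, set $\gamma(t)=x+t(\tilde x-x)$ for $t\in[0,1]$, and study $\phi(t):=m(\gamma(t))=\comi{\p V(\gamma(t))}$, which is $C^{1}$ because $V\in C^{2}$. Differentiating and using $\abs{\p V}\leq\comi{\p V}$ gives $\abs{\phi'(t)}\leq\abs{\mathrm{Hess}\,V(\gamma(t))\,(\tilde x-x)}$; since $n\in\{2,3\}$, the pointwise bound \eqref{+HyP} on the second derivatives of $V$ yields $\norm{\mathrm{Hess}\,V(z)}\leq C\comi{\p V(z)}^{s}$ for every $z$, whence
\[
  \abs{\phi'(t)}\leq C\,\abs{x-\tilde x}\,\phi(t)^{s}\leq C\,r\,\phi(0)^{-s/2}\,\phi(t)^{s},\qquad t\in[0,1].
\]

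The heart of the argument, and the only genuinely delicate point, is to close this differential inequality with no a priori control of $\phi$ along the segment. To do so I would put $T^{*}=\sup\set{T\in[0,1]:\ \phi(t)\leq 2\phi(0)\ \text{for all}\ t\in[0,T]}$, which is positive by continuity. On $[0,T^{*}]$ the last display gives $\abs{\phi'(t)}\leq C\,2^{s}\,r\,\phi(0)^{s/2}\leq C\,2^{s}\,r\,\phi(0)$, using $\phi(0)\geq1$ and $s/2<1$; integrating, $\phi(T^{*})\leq\phi(0)\inner{1+C\,2^{s}\,r}<2\phi(0)$ once $r$ is small enough, which by continuity forces $T^{*}=1$ and hence $m(\tilde x)=\phi(1)\leq 2\,m(x)$. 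With $\phi\leq 2\phi(0)$ now known on all of $[0,1]$, the same integration also gives $\phi(1)\geq\phi(0)\inner{1-C\,2^{s}\,r}\geq\tfrac12\phi(0)$ after shrinking $r$ once more, i.e. $m(\tilde x)\geq\tfrac12\, m(x)$. This yields the two-sided comparison and therefore the slow variation of $g$ with these constants; apart from the bootstrap, the ingredients are just the mean value theorem, the hypothesis \eqref{+HyP}, and the elementary inequality $\comi{\p V}^{s/2}\leq\comi{\p V}$.
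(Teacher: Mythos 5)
Your proof is correct. Note that the paper does not prove this lemma at all --- it is imported verbatim as Lemma 4.2 of \cite{MR3060700} --- and your continuity/bootstrap argument along the segment, combined with the Hessian bound from \eqref{+HyP} and the observation that $\comi{\partial_x V}\geq 1$ absorbs the exponent $s/2<1$, is essentially the standard proof given in that reference; it is complete as written.
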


\begin{lemma}[(Lemma 18.4.4. in \cite{Hormander85})]\label{uni}
  Let $g$ be a slowly varying metric. We can find a constant $r_0>0$ and a sequence $x_\mu\in\mathbb{R}^n,\mu\geq1,$ such
  that the union of the balls
  \[
    \Omega_{\mu,r_0}=\set{x\in\mathbb{R}^{n};\quad g_{x_\mu}\inner{x-x_\mu}<r_0^2}
  \]
  coves the whole space $\mathbb{R}^{n}.$ Moreover there exists a positive integer
  $N ,$ depending only on $r_0,$ such that the intersection
  of more than $N $ balls is always empty.
  One can choose a
  family of nonnegative functions $\set{\varphi_\mu}_{\mu\geq 1}$  such that
 \begin{equation}\label{73966}
   {\rm supp}~ \varphi_\mu\subset\Omega_{\mu,r_0},\quad
   \sum_{\mu\geq1} \varphi_\mu^2 =1~~\,\,{\rm and }~~\,\,
   \sup_{\mu\geq1}\abs{\partial_x \varphi_\mu (x)}\leq C \comi{\partial_xV(x)}^{s\over2}.
\end{equation}
\end{lemma}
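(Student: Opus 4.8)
The plan is just to recall the standard covering construction for a slowly varying metric (this is essentially \cite[Lemma~18.4.4]{Hormander85}), the only non-formal input being the slow variation of $g$ supplied by Lemma~\ref{lem729}. Throughout write $\rho(x)=\comi{\partial_xV(x)}^{-s/2}$, so that $\Omega_{\mu,\sigma}=\set{x:\ \abs{x-x_\mu}<\sigma\rho(x_\mu)}$, and note that Lemma~\ref{lem729} yields a constant $c_*\ge1$ (one may take $c_*=C_*^{1/2}$) with $c_*^{-1}\le\rho(x)/\rho(\tilde x)\le c_*$ whenever $g_{\tilde x}(x-\tilde x)\le r^2$. Fix $r_0\in(0,r/2)$ once and for all. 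The argument has three steps: extract a maximal separated family of centres; prove bounded overlap of the induced balls; and normalize translates of one fixed bump.

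\emph{Step 1 (maximal separated family).} Using Zorn's lemma I would pick $\set{x_\mu}_{\mu\ge1}$ maximal subject to $\abs{x_\mu-x_\nu}\ge\frac{r_0}{2}\rho(x_\mu)$ for all $\mu\ne\nu$. Since no point can be adjoined, for every $x\in\mathbb R^n$ there is $\mu$ with $x\in\Omega_{\mu,r_0/2}$; the small comparison hidden here, converting a bound of the form $\abs{x-x_\mu}<\frac{r_0}{2}\rho(x)$ into a multiple of $\rho(x_\mu)$ via Lemma~\ref{lem729}, costs only a fixed factor which one absorbs by shrinking $r_0$. In particular the balls $\Omega_{\mu,r_0/2}$, a fortiori the $\Omega_{\mu,r_0}$, cover $\mathbb R^n$.

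\emph{Step 2 (bounded overlap).} Suppose $x\in\Omega_{\mu,r_0}$ for every $\mu$ in an index set $I$. Then $g_{x_\mu}(x-x_\mu)<r_0^2<r^2$, so $c_*^{-1}\le\rho(x)/\rho(x_\mu)\le c_*$ for $\mu\in I$; hence all $x_\mu$ with $\mu\in I$ lie in the Euclidean ball $B\bigl(x,c_*r_0\rho(x)\bigr)$, while $\abs{x_\mu-x_\nu}\ge\frac{r_0}{2}\rho(x_\mu)\ge\frac{r_0}{2c_*}\rho(x)$ for distinct $\mu,\nu\in I$. Consequently the open Euclidean balls $B\bigl(x_\mu,\frac{r_0}{4c_*}\rho(x)\bigr)$, $\mu\in I$, are pairwise disjoint and contained in $B\bigl(x,2c_*r_0\rho(x)\bigr)$, so comparing Lebesgue measures gives $\#I\le(8c_*^2)^n=:N$; since $n$ and $c_*$ are structural, $N$ depends only on $r_0$. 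Together with $\sigma$-compactness of $\mathbb R^n$ this also shows that $\set{x_\mu}$ can be indexed by $\N$. I expect this volume-packing count to be the main point of the proof: it is the only step that is geometry rather than bookkeeping, and once it is in hand the rest is routine.

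\emph{Step 3 (partition of unity).} Choose $\chi\in C_0^\infty(\mathbb R^n)$ with $0\le\chi\le1$, $\chi\equiv1$ on $\set{\abs z\le1/2}$, ${\rm supp}\,\chi\subset\set{\abs z<1}$, and set
\[
  \psi_\mu(x)=\chi\bigl(\abs{x-x_\mu}/(r_0\rho(x_\mu))\bigr).
\]
Then ${\rm supp}\,\psi_\mu\subset\Omega_{\mu,r_0}$, $\psi_\mu\equiv1$ on $\Omega_{\mu,r_0/2}$, and $\abs{\partial_x\psi_\mu}\le C\bigl(r_0\rho(x_\mu)\bigr)^{-1}=Cr_0^{-1}\comi{\partial_xV(x_\mu)}^{s/2}$. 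By Step 1, $\sum_\mu\psi_\mu^2\ge1$ on $\mathbb R^n$, so $\varphi_\mu:=\psi_\mu\bigl(\sum_\nu\psi_\nu^2\bigr)^{-1/2}$ is well defined, smooth, supported in $\Omega_{\mu,r_0}$, and satisfies $\sum_\mu\varphi_\mu^2=1$; this is the first two assertions of \eqref{73966}. For the gradient bound, fix $x$: only the $\psi_\nu$ with $x\in\Omega_{\nu,r_0}$ are nonzero at $x$, at most $N$ of them by Step 2, and for each such $\nu$ the comparison $\rho(x_\nu)\ge c_*^{-1}\rho(x)$ gives $\abs{\partial_x\psi_\nu(x)}\le C\comi{\partial_xV(x)}^{s/2}$; since $\sum_\nu\psi_\nu^2\ge1$ with gradient bounded by $CN\comi{\partial_xV(x)}^{s/2}$, the quotient rule yields $\abs{\partial_x\varphi_\mu(x)}\le C\comi{\partial_xV(x)}^{s/2}$ with $C$ depending only on $N$, which is the last assertion of \eqref{73966}.
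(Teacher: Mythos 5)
Your proof is correct, and it is essentially the standard argument (maximal separated family, volume-packing for the bounded overlap, normalization of translated bumps) behind H\"ormander's Lemma 18.4.4, which the paper simply cites without proof. The one place you rightly flag as needing care --- the asymmetry of the separation condition in Step 1, resolved by the slow-variation comparison and a shrinkage of $r_0$ --- is handled adequately, and the final gradient bound in \eqref{73966} follows from your Step 3 exactly as claimed.
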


 By  Lemmas \ref{lem729} and \ref{uni}  we can find a constant $C,$ such that for any
  $\mu\geq1$ one has
\begin{equation}\label{7310}
  \forall~x, \tilde x\in {\rm
  supp}~  \varphi_\mu,\quad C^{-1}\comi{\partial_xV(\tilde x)} \leq \comi{\partial_xV(x)} \leq C \comi{\partial_xV(\tilde x)}.
\end{equation}

  \begin{lemma}[Lemma 4.6 in \cite{MR3060700}]\label{lemreg}
     Let $\set{\varphi_\mu}_{\mu\geq1}$ be the partition given in Lemma \ref{uni},
     and let $a\in]0, 1/2[$ be a real number.
     Then there exists a constant $C,$ depending on the integer $N $
     given in Lemma \ref{uni}, such that for any $u\in
     C_0^\infty\big(\mathbb{R}^{2n}\big)$ we have
     \begin{equation*}
       \norm{\inner{1-\triangle_x}^{a}u}_{L^2}^2\leq
       C\sum_{\mu\geq1}\norm{\inner{1-\triangle_x}^{a}\varphi_\mu  u}_{L^2}^2
       +C\norm{Pu}_{L^2}^2+C\norm{u}_{L^2}^2.
     \end{equation*}
   \end{lemma}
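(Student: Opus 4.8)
\textbf{Proof strategy for Lemma \ref{lemreg}.}
The plan is to exploit the almost-orthogonality of the partition $\{\varphi_\mu\}$ together with the finite-overlap property from Lemma \ref{uni}, and to absorb the error terms produced by commuting $\varphi_\mu$ past the first-order transport part $Q$ of $P$. Since $\sum_\mu \varphi_\mu^2 = 1$, for any real number $a$ we may write $\comi{D_x}^a u = \comi{D_x}^a \big(\sum_\mu \varphi_\mu^2 u\big)$, and the heart of the matter is to compare $\comi{D_x}^a \varphi_\mu^2 u$ with $\varphi_\mu \comi{D_x}^a \varphi_\mu u$. Because $a < 1/2$, the operator $\comi{D_x}^a$ is of order strictly less than $1$, so the commutator $[\comi{D_x}^a, \varphi_\mu]$ is an operator of order $a-1 < 0$ whose symbol involves $\partial_x\varphi_\mu$, which by \eqref{73966} is bounded by $C\comi{\partial_x V}^{s/2}$; on the support of $\varphi_\mu$ this is comparable to the constant $\comi{\partial_x V(x_\mu)}^{s/2}$ by \eqref{7310}. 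The key point is that this gain of one derivative pays exactly for the metric weight: the pseudodifferential calculus for the slowly varying metric $g$ in \eqref{7292} makes $[\comi{D_x}^a,\varphi_\mu]$ uniformly bounded on $L^2$ \emph{up to} the factor $\comi{\partial_x V}^{s/2 - 1/2}$, and since $s < 4/3$ one has $s/2 - 1/2 < 1/3$, so this factor is controlled by $\comi{\partial_x V}^{1/3}$, hence ultimately (after one more application of the weighted estimate \eqref{weg}, or of \eqref{lje}) by $\norm{Pu}_{L^2}+\norm{u}_{L^2}$.

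More precisely, I would proceed as follows. First, using $\sum_\mu\varphi_\mu^2=1$ and expanding, bound
\[
  \norm{\comi{D_x}^a u}_{L^2}^2 = \Big\langle \comi{D_x}^{2a} u,\ \sum_\mu \varphi_\mu^2 u\Big\rangle_{L^2} = \sum_\mu \norm{\varphi_\mu \comi{D_x}^a u}_{L^2}^2 + \sum_\mu \big\langle [\comi{D_x}^{2a},\varphi_\mu]\varphi_\mu u,\ u\big\rangle_{L^2},
\]
after symmetrizing. Second, replace $\varphi_\mu\comi{D_x}^a u$ by $\comi{D_x}^a\varphi_\mu u$ at the cost of another commutator $[\comi{D_x}^a,\varphi_\mu]u$, which by the calculus is bounded in $L^2$ by $C\comi{\partial_x V}^{a - 1 + s/2}u$ locally; summing over $\mu$ with the finite-overlap constant $N$ and using $a<1/2$, $s<4/3$ gives a total of order at most $\comi{\partial_x V}^{1/3}$ in the exponent (one checks $a-1+s/2 < 1/2 - 1 + 2/3 = 1/6 < 1/3$). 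Third, control the remaining commutator term $\sum_\mu \langle[\comi{D_x}^{2a},\varphi_\mu]\varphi_\mu u, u\rangle$ in the same way: $[\comi{D_x}^{2a},\varphi_\mu]$ has order $2a-1<0$, so it is bounded, again picking up the weight $\comi{\partial_x V}^{2a-1+s/2}$, which has exponent $<1/3$ as well. Fourth, collect these error terms; each is bounded by $\eps\norm{\comi{\partial_x V}^{2/3}u}_{L^2}^2 + C_\eps\norm{u}_{L^2}^2$, and then by $\eps C(\norm{Pu}_{L^2}^2 + \norm{u}_{L^2}^2)$ via the already-proved weighted estimate \eqref{weg}; choosing $\eps$ small absorbs them, leaving $\sum_\mu\norm{\comi{D_x}^a\varphi_\mu u}_{L^2}^2 + C\norm{Pu}_{L^2}^2 + C\norm{u}_{L^2}^2$ on the right, which is the claim with $(1-\triangle_x)^a = \comi{D_x}^{2a}$ (adjusting $a$).

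The main obstacle I anticipate is making the pseudodifferential bookkeeping for the Beals--Fefferman / Hörmander calculus associated with the non-constant metric $g = \comi{\partial_x V}^s\abs{dx}^2$ fully rigorous and uniform in $\mu$: one must check that $g$ is not merely slowly varying (Lemma \ref{lem729}) but $\sigma$-temperate so that $\comi{D_x}^a$ and the $\varphi_\mu$ lie in the appropriate symbol classes, and that the constants in the commutator estimates depend only on $C_0$, $s$, and the overlap number $N$, not on $\mu$. An alternative, more elementary route that sidesteps heavy calculus — and is likely what the paper intends, given the reference to \cite[Lemma 4.6]{MR3060700} — is to estimate $[\comi{D_x}^a,\varphi_\mu]$ directly via the integral kernel of $\comi{D_x}^a$ and the mean value theorem on $\varphi_\mu$, exploiting that on each ball $\Omega_{\mu,r_0}$ the localization scale is $\comi{\partial_x V(x_\mu)}^{-s/2}$; this reduces everything to a rescaled, constant-coefficient commutator estimate plus Schur's test with the finite-overlap bound. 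Either way, once the uniform commutator bound with the weight $\comi{\partial_x V}^{s/2-1/2}$ is in hand, the rest is the routine absorption argument sketched above.
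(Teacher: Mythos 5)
Your overall architecture is the right one and, as far as I can tell, coincides with that of the cited source: the paper itself gives no proof of this lemma (it is quoted verbatim as Lemma 4.6 of \cite{MR3060700}), and the proof there is exactly a partition-of-unity decomposition via $\sum_\mu\varphi_\mu^2=1$, commutator estimates for $[\comi{D_x}^{2a},\varphi_\mu]$ measured against powers of $\comi{\partial_xV}$, and absorption of the errors through the weighted estimate \eqref{weg} (which is where the $\norm{Pu}_{L^2}^2$ on the right-hand side comes from). You correctly identify all of these ingredients.

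There are, however, two genuine gaps. First, the quantitative commutator bound you assert is not justified: you obtain the exponents $a-1+s/2$ and $2a-1+s/2$ by converting the negative $\xi$-order $a-1$ of $[\comi{D_x}^{a},\varphi_\mu]$ into a negative power of $\comi{\partial_xV}$. These are unrelated scales --- the metric \eqref{7292} lives on $x$-space alone, not on phase space, so there is no calculus in which $\comi{\xi}^{a-1}$ trades for $\comi{\partial_xV}^{a-1}$. The bound one can actually prove (by exactly the kernel-splitting/rescaling argument you relegate to the ``alternative route'': split the kernel of $\comi{D_x}^{2a}$ at scale $\norm{\partial_x\varphi_\mu}_{L^\infty}^{-1}$ and apply Schur's test) is
$\norm{[\comi{D_x}^{2a},\varphi_\mu]}_{L^2\to L^2}\leq C\norm{\partial_x\varphi_\mu}_{L^\infty}^{2a}\leq C\comi{\partial_xV(x_\mu)}^{as}$,
i.e.\ exponent $as$ rather than your $2a-1+s/2$. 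Your numerology is therefore wrong, but the conclusion survives by luck of the hypotheses: $as<\tfrac12\cdot\tfrac43=\tfrac23$, so the error is still absorbable by \eqref{weg}. Second, the step you explicitly defer --- summing the commutator errors over $\mu$ --- is the actual content of the lemma, not a routine afterthought: $[\comi{D_x}^{2a},\varphi_\mu]u$ is not supported near $\Omega_{\mu,r_0}$, and outside that ball $\comi{\partial_xV(x)}$ is no longer comparable to $\comi{\partial_xV(x_\mu)}$, so the finite-overlap number $N$ alone does not close the estimate; one needs the off-diagonal decay of the kernel of $\comi{D_x}^{2a}$ together with the slow variation \eqref{7310} to organize the sum. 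As written, the proposal identifies the correct road but does not traverse its hardest stretch, and the one quantitative claim it does make in detail rests on an invalid step.
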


  Let
$\set{\varphi_\mu}_{\mu\geq1}$ be the partition of unity given in
Lemma \ref{uni}.  For each  $x_\mu\in\mathbb{R}^{n}$  we define  the operator
  \[
    P_{x_\mu}=y\cdot\partial_x-\partial_x V(x_\mu)\cdot
    \partial_y-H(x_\mu)\cdot\inner{y\wedge\partial_y }  -\triangle_y+\frac{\abs y^2}{4}-\frac{n}{2}.
  \]
  Then   
  \begin{equation}\label{wh}
     \varphi_\mu P u=P_{x_\mu}\varphi_\mu u+ R_\mu u
  \end{equation}
  with 
  \begin{equation}\label{+Rem}
  R_\mu=-y\cdot\partial_x\varphi_\mu(x)
  -\varphi_\mu\inner{\partial_x V(x)-\partial_x V(x_\mu)}\cdot
  \partial_y-\varphi_\mu\inner{H(x)-H (x_\mu)}\cdot\inner{y\wedge 
  \partial_y}.
\end{equation}

  \begin{lemma}\label{patch}
Suppose $H(x)$ and $V(x)$ satisfy the conditions   \eqref{eh}-\eqref{+HyP} and \eqref{ead}.  Let $R_\mu$ be
the operator given in \eqref{+Rem}. Then
  \begin{equation}\label{CMU}
    \forall~u\in   C_0^\infty\big(\mathbb{R}^{2n}\big),\quad\sum\limits_{\mu\geq1}\norm{
    R_\mu
    u}_{L^2}^2 \leq C
    \set{\norm{P \comi{\partial_xV(x)}^{\tilde s}
    u}_{L^2}^2+\norm{P
    u}_{L^2}^2+\norm{u}_{L^2}^2},
  \end{equation}
  where $\tilde s=\frac{2}{3}-\tau$ with $\tau$ given in \eqref{+++A1}, i.e.,
  $\tilde s$ equals to $0$ if $s\leq {2/3}$,  $s-{2/3}$ if $ 2/3 < s\leq 10/9,$ and
   $ s/2$ if $ 10/9 < s< 4/3$.
\end{lemma}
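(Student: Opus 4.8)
The plan is to split $R_\mu u$ into the three summands appearing in \eqref{+Rem}, estimate the $\mu$-sum of the $L^2$-norm of each separately, and thereby reduce \eqref{CMU} to three global norms that are controlled by the material of Section \ref{sec3}. The only facts about the partition I would use are $\operatorname{supp}\varphi_\mu\subset\Omega_{\mu,r_0}$, $\sum_\mu\varphi_\mu^2=1$, the bound $\abs{\partial_x\varphi_\mu}\leq C\comi{\partial_xV}^{s/2}$ from \eqref{73966}, the finite-overlap property, and the comparison \eqref{7310}.

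The geometric heart of the argument is the estimate of the coefficient differences on $\operatorname{supp}\varphi_\mu$. Since $\Omega_{\mu,r_0}$ is the Euclidean ball $\set{x:\comi{\partial_xV(x_\mu)}^{s}\abs{x-x_\mu}^2<r_0^2}$, it is convex and hence contains the segment $[x_\mu,x]$ for every $x$ in it; by \eqref{7310} we have $\comi{\partial_xV}\simeq\comi{\partial_xV(x_\mu)}$ all along this segment, while $\abs{x-x_\mu}<r_0\comi{\partial_xV(x_\mu)}^{-s/2}$. Combining this with the mean value theorem and \eqref{+HyP}, resp. \eqref{ead}, the $\comi{\partial_xV(x_\mu)}^{-s/2}$ gain from the ball exactly balances the $\comi{\partial_xV}^{s}$, resp. $\comi{\partial_xV}^{s/2}$, growth of $\partial_x^2V$, resp. $\partial_xH$, so that uniformly in $\mu$ and for $x\in\operatorname{supp}\varphi_\mu$
\[
  \abs{\partial_xV(x)-\partial_xV(x_\mu)}\leq C\comi{\partial_xV(x)}^{s/2},\qquad \abs{H(x)-H(x_\mu)}\leq C.
\]

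Inserting these bounds and $\abs{\partial_x\varphi_\mu}\leq C\comi{\partial_xV}^{s/2}$ into \eqref{+Rem}, squaring, and summing over $\mu$ with the finite overlap (so that $\sum_\mu\abs{\partial_x\varphi_\mu(x)}^2\leq C\comi{\partial_xV(x)}^{s}$) together with $\sum_\mu\varphi_\mu^2=1$, the three terms collapse to
\[
  \sum_{\mu\geq1}\norm{R_\mu u}_{L^2}^2\leq C\Bigl(\norm{\comi{y}\comi{\partial_xV}^{s/2}u}_{L^2}^2+\norm{\comi{\partial_xV}^{s/2}\comi{D_y}u}_{L^2}^2+\norm{(y\wedge\partial_y)u}_{L^2}^2\Bigr).
\]
The last term needs nothing beyond Section \ref{sec3}: using $\abs{(y\wedge\partial_y)u}\lesssim\abs{y}\abs{\nabla_yu}$ and $\partial_{y_j}=L_j-y_j/2$ it is dominated by $\sum_j\bigl(\norm{\comi{y}L_ju}_{L^2}^2+\norm{\comi{y}^2u}_{L^2}^2\bigr)$, and commuting $L_j$ past $\comi{y}$ (costing only $\norm{u}_{L^2}$) reduces this, via \eqref{elv}, \eqref{uy} and \eqref{lje}, to $C\set{\norm{Pu}_{L^2}^2+\norm{u}_{L^2}^2}$. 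For the first two terms I would use that $\comi{\partial_xV}^{s/2}$ depends on $x$ only, hence commutes with $L_j$, $\comi{y}$ and $\comi{D_y}$, so by \eqref{uy} both are bounded by $C\sum_j\norm{\comi{\partial_xV}^{s/2}L_ju}_{L^2}+C\norm{\comi{\partial_xV}^{s/2}u}_{L^2}$; since $s/2<2/3$, the term without $L_j$ is absorbed by the weighted estimate \eqref{weg}. For the $L_j$-terms I invoke the inequality $s/2\leq\tfrac13+\tilde s$ — which holds in each of the three ranges of $s$ defining $\tau$ — to write $\comi{\partial_xV}^{s/2}\leq\comi{\partial_xV}^{1/3+\tilde s}$ and then apply \eqref{lje} with $u$ replaced by $w:=\comi{\partial_xV}^{\tilde s}u$, obtaining $\sum_j\norm{\comi{\partial_xV}^{1/3+\tilde s}L_ju}_{L^2}^2=\sum_j\norm{L_j\comi{\partial_xV}^{1/3}w}_{L^2}^2\leq C\set{\norm{Pw}_{L^2}^2+\norm{w}_{L^2}^2}$; since $\tilde s<2/3$, \eqref{weg} gives $\norm{w}_{L^2}\leq C\set{\norm{Pu}_{L^2}+\norm{u}_{L^2}}$. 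Collecting everything proves \eqref{CMU}.

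The main obstacle is not any single estimate but two bookkeeping matters. First, \eqref{weg} and \eqref{lje} are stated for $u\in C_0^\infty$, whereas $w=\comi{\partial_xV}^{\tilde s}u$ is only $C^1$ in $x$ when $\tilde s>0$ (it remains smooth in $y$, and $P$ is first order in $x$, so $Pw$ is classically defined); this I would handle by mollifying $\comi{\partial_xV}^{\tilde s}$ in $x$ and passing to the limit, the norms involved being stable under the approximation. Second, one must check that $s/2\le\tfrac13+\tilde s$, together with $\tilde s<2/3$ and $s/2<2/3$, holds in all three regimes of $s$; this is precisely what makes the stated value of $\tilde s$ (equivalently $\tau$) the one that closes the argument, and it is the place where the particular exponents in \eqref{eh}, \eqref{+HyP}, \eqref{ead} enter.
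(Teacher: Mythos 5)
Your proof is correct, and up to the final step it coincides with the paper's: the same decomposition of $R_\mu$ into three pieces, the same use of \eqref{73966} and the finite overlap, the convexity of the balls plus the mean value theorem with \eqref{+HyP}, \eqref{ead} and \eqref{7310} to get $\abs{\partial_xV(x)-\partial_xV(x_\mu)}\leq C\comi{\partial_xV(x)}^{s/2}$ and $\abs{H(x)-H(x_\mu)}\leq C$, and the same reduction to controlling $\norm{\comi{y}\comi{\partial_xV}^{s/2}u}_{L^2}^2+\norm{\comi{D_y}\comi{\partial_xV}^{s/2}u}_{L^2}^2$ plus a term handled by \eqref{lje}, \eqref{elv} and \eqref{uy}. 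Where you genuinely diverge is in controlling those two weighted quantities: the paper splits into the three ranges of $s$ and, in the intermediate range $2/3<s\leq 10/9$, runs a commutator computation with $P$ itself (rewriting $\comi{P\comi{\partial_xV}^{s/2}u,\,\comi{\partial_xV}^{s/2}u}_{L^2}$ through the weights $\comi{\partial_xV}^{s-2/3}$ and $\comi{\partial_xV}^{2/3}$ and absorbing a $\comi{y}\comi{\partial_xV}^{2s-5/3}$ error), whereas you stay entirely inside the $L_j$-calculus: commute the $x$-weight through \eqref{uy}, check the single inequality $s/2\leq\tfrac13+\tilde s$ in all three regimes, and apply \eqref{lje} to $w=\comi{\partial_xV}^{\tilde s}u$, which produces exactly the term $\norm{P\comi{\partial_xV}^{\tilde s}u}_{L^2}^2$ appearing in \eqref{CMU}, the remainder $\norm{w}_{L^2}$ being absorbed by \eqref{weg} since $\tilde s<2/3$. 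Your route is more uniform and eliminates the case analysis (indeed it would tolerate any $\tilde s\geq s/2-1/3$, so it proves the stated value with room to spare); the density issue you flag for $w\notin C_0^\infty(\mathbb{R}^{2n})$ is equally present, and equally left implicit, in the paper's own applications of \eqref{elv}, \eqref{uy} and \eqref{weg} to $\comi{\partial_xV}^{s/2}u$, so it is not a defect specific to your argument.
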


\begin{proof} 
We write
\begin{equation}\label{rmu}
\sum\limits_{\mu\geq1}\norm{
    R_\mu
    u}_{L^2}^2 \leq I_1+I_2+I_3
\end{equation}
with
\begin{eqnarray*}
\begin{aligned}
I_1&=  2\sum\limits_{\mu\geq1}\norm{\inner{y\cdot\partial_x\varphi_\mu}
    u}_{L^2}^2,\\
	I_2&=  2\sum\limits_{\mu\geq1}\norm{\varphi_\mu \inner{\partial_x V(x)-\partial_x
    V(x_\mu)}\cdot \partial_y
    u}_{L^2}^2,\\
    I_3&=  2\sum\limits_{\mu\geq1}\norm{\varphi_\mu\inner{H(x)-H (x_\mu)}\cdot\inner{y\wedge 
  \partial_y}
    u}_{L^2}^2.
	\end{aligned}
\end{eqnarray*}
Note it is just finite sum of at most $N$ terms for each $I_k, 1\leq k\leq 3,$ recalling $N$ is the integer given in Lemma \ref{uni}.   It follows from the last inequality in \eqref{73966} that   
 \begin{eqnarray*}
	 I_1\leq  C  \norm{\comi{y}   \comi{\partial_xV(x)}^{s/2}u}_{L^2}^2.
	 \end{eqnarray*}
Similarly  observing $\abs{x-x_\mu}\leq C\comi{\partial_xV(x_\mu)}^{-{s\over2}}$ for any $x\in$
 supp~$\varphi_\mu,$
 we use the conditions \eqref{+HyP} and \eqref{ead} as well as  \eqref{7310}
    to compute
   \begin{eqnarray*}
   I_2 
     \leq C \norm{\comi{D_{y}}  \comi{\partial_xV(x)}^{s/2}u}_{L^2}^2 
   \end{eqnarray*}
and
\begin{eqnarray*}
	I_3\leq C\norm{\comi{D_y}\comi yu}_{L^2}^2\leq C\sum_{1\leq j\leq n}\norm{L_j\comi yu}_{L^2}^2+C\norm{\comi{y} u}_{L^2}^2\leq C \set{\norm{P
    u}_{L^2}^2+\norm{u}_{L^2}^2},
\end{eqnarray*}
the last inequality using \eqref{lje} as well as \eqref{elv} and \eqref{uy}.   As a result plugging the estimates on $I_k$ into \eqref{rmu}    yields
\begin{equation}\label{let}
	\sum\limits_{\mu\geq1}\norm{
    R_\mu
    u}_{L^2}^2 
    \leq        C\big( \norm{\comi{y}   \comi{\partial_xV }^{s\over2}u}_{L^2}^2+ \norm{\comi{D_y}   \comi{\partial_xV }^{s\over2}u}_{L^2}^2\big)+C \inner{\norm{P
    u}_{L^2}^2+\norm{u}_{L^2}^2}.
    \end{equation}
  It remains to control the first  two terms on the right side, and here we follow the argument in \cite{MR3060700} with modification.    

\noindent\underline{\it (a) The case of   $s\leq 2/3.$}
In such a case  we have
  \begin{multline*}
   \norm{\comi{y}   \comi{\partial_xV(x)}^{s\over2}u}_{L^2}^2+ \norm{\comi{D_y}   \comi{\partial_xV(x)}^{s\over2}u}_{L^2}^2\\
 \leq  \sum_{1\leq j\leq n}\norm{L_j  \comi{\partial_xV(x)}^{1/3}u}_{L^2}^2+C\norm{  \comi{\partial_xV(x)}^{1/3}u}_{L^2}^2 
   \leq C\set{
    \norm{Pu}_{L^2}^2+\norm{ u}_{L^2}^2},
  \end{multline*}
  the last inequality using the estimates \eqref{weg}  and \eqref{lje} that were established in the previous section.  This  with \eqref{let}  yields the validity of
    \eqref{CMU}
    for $s\leq {2/3}$.

\medskip
\noindent\underline{\it (b) The case of   $10/9<s<4/3.$}  We use 
   \eqref{elv} and \eqref{uy} to conclude, for any $u\in C_0^\infty\inner{\mathbb{R}^{2n}}$,
   \begin{eqnarray*}
   \begin{aligned}
 \norm{\comi{y}   \comi{\partial_xV }^{s\over2}u}_{L^2}^2+ \norm{\comi{D_y}   \comi{\partial_xV }^{s\over2}u}_{L^2}^2  
   &\leq C \norm{
    P \comi{\partial_xV }^{s\over2}u}_{L^2}^2+C \norm{ \comi{\partial_xV}^{s\over2}u}_{L^2}^2\\
   & \leq C\norm{
    P \comi{\partial_xV }^{s\over2}u}_{L^2}^2+C\norm{ Pu}_{L^2}
    +C\norm{u}_{L^2}^2,
    \end{aligned}
  \end{eqnarray*}
   the last inequality using again   the weighted estimate \eqref{weg} since $s<4/3$.
  This gives the validity of \eqref{CMU} for $10/9<s< 4/3.$

\medskip
\noindent\underline{\it (c) The case of   $2/3<s\leq 10/9.$}  In this case we use \eqref{elv} and \eqref{uy}   to compute 
  \begin{eqnarray*}
    && \norm{\comi{y}   \comi{\partial_xV }^{s\over2}u}_{L^2}^2+ \norm{\comi{D_y}   \comi{\partial_xV }^{s\over2}u}_{L^2}^2  \\ &&\leq   \abs{\comi{
    P \comi{\partial_xV }^{s\over2}u,~ \comi{\partial_xV }^{s\over2}u}_{L^2}}+C\norm{\comi{\partial_xV }^{s\over2}u}_{L^2}^2 \\
    &&\leq 
   C \abs{\comi{
    P \comi{\partial_xV }^{s-{2/3}}u,~ \comi{\partial_xV }^{{2/3}}u}_{L^2}}
    + C\norm{\comi{\partial_xV }^{s\over2}u}_{L^2}^2\\
    &&\quad+ C\abs{\comi{\com{
    P,~ \comi{\partial_xV }^{\frac{s}{2}-{2\over3}}} \comi{\partial_xV }^{{s\over2}}u,
    ~ \comi{\partial_xV }^{{2/3}}u}_{L^2}}\\
    &&\leq 
   C \norm{
    P \comi{\partial_xV }^{s-{2/3}}u}_{L^2}^2+ C_\eps\norm{\comi{\partial_xV }^{{2/3}}u}_{L^2}^2 
    + \eps\norm{\comi{y}\comi{\partial_xV }^{2s-5/3}u}_{L^2}^2.
  \end{eqnarray*}
Using the fact that $2s-{5\over3}\leq \frac{s}{2}$ for $s\leq 10/9,$ and letting $\eps$ above small enough, we get, in view of the weighted estimate \eqref{weg},
    \begin{eqnarray*}
\norm{\comi{y}   \comi{\partial_xV }^{s\over2}u}_{L^2}^2+ \norm{\comi{D_y}   \comi{\partial_xV }^{s\over2}u}_{L^2}^2
    \leq 
    C \set{\norm{
    P \comi{\partial_xV(x)}^{s-{2\over3}}u}_{L^2}^2+\norm{
    Pu}_{L^2}^2+\norm{u}_{L^2}^2}.
  \end{eqnarray*}
   Inserting the above inequality into \eqref{let}  
   we get the desired estimate \eqref{CMU} for
  ${2/3}<s\leq 10/9.$ Thus the proof of Lemma \ref{patch} is completed.
  \end{proof}
  
   \begin{lemma}\label{PA}
There is a constant $C$ independent of $x_\mu,$  such that
 for any $u\in  C_0^\infty\big(\mathbb{R}^{2n}\big),$ one has
  \begin{equation*} 
   \norm{ \comi{\partial_x
   V(x_\mu)}^{2/3}u}_{L^2}^2+\norm{\comi{D_x}^{2/3} u}_{L^2}^2
   \leq
   C\set{
   \norm{P_{x_\mu} u}_{L^2}^2+\norm{u}_{L^2}^2},
\end{equation*}
or equivalently,
\begin{eqnarray*}
	 \norm{ \inner{1+\abs{\partial_x
   V(x_\mu)}^2-\Delta_x}^{1/3}u}_{L^2}^2
   \leq
   C\set{
   \norm{P_{x_\mu} u}_{L^2}^2+\norm{u}_{L^2}^2},
\end{eqnarray*}
where the fractional Laplacian is defined by
$$\mathscr F_x\set{ \inner{1+\abs{\partial_x
   V(x_\mu)}^2-\Delta_x}^{1/3} u}= \inner{1+\abs{\partial_x
   V(x_\mu)}^2+\xi^2}^{1/3} \mathscr F_x u. $$
\end{lemma}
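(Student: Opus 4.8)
The plan is to freeze the $x$–variable by a partial Fourier transform. Since $P_{x_\mu}$ has coefficients independent of $x$, applying $\mathscr F_x$ and abbreviating $b=\partial_xV(x_\mu)$, $H=H(x_\mu)$, the operator becomes the $\xi$–parametrised family on $L^2(\mathbb R^n_y)$,
\[
  \widehat P_\xi=\mathcal O+\mathcal S,\qquad
  \mathcal O=\sum_{j=1}^n L_j^*L_j,\qquad
  \mathcal S=iy\cdot\xi-b\cdot\partial_y-H\cdot\inner{y\wedge\partial_y},
\]
with $\mathcal O$ self-adjoint and $\ge0$ and $\mathcal S$ skew-adjoint. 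By Plancherel's theorem (and a routine localization in $\xi$) it suffices to prove the uniform bound
\[
  \bigl(1+\abs b^2+\abs\xi^2\bigr)^{2/3}\norm v_{L^2_y}^2
  \le C\bigl(\norm{\widehat P_\xi v}_{L^2_y}^2+\norm v_{L^2_y}^2\bigr),\qquad v\in C_0^\infty(\mathbb R^n_y),
\]
with $C$ independent of $\xi\in\mathbb R^n$ and of $x_\mu$, hence of $b$ and $H$; this is exactly the fractional-Laplacian form stated in the lemma.

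I would then treat the two gains separately. The weight $\comi b^{4/3}$ comes for free: $P_{x_\mu}$ is itself a Fokker--Planck operator of type \eqref{FPE}, with electric potential the \emph{linear} function $x\mapsto b\cdot x$ — so that \eqref{+HyP} and \eqref{ead} hold, trivially, with $s=0$ — and with the constant field $H$, for which \eqref{eh} holds since $\abs H\le C_0\comi b^{\delta}$. Hence the weighted estimate \eqref{weg} established in Section \ref{sec3}, applied to $P_{x_\mu}$ and transferred through $\mathscr F_x$ by Plancherel, yields at once
\[
  \comi b^{4/3}\norm v_{L^2_y}^2\le C\bigl(\norm{\widehat P_\xi v}_{L^2_y}^2+\norm v_{L^2_y}^2\bigr),
\]
with $C$ depending only on $C_0$ (and $\delta$), in particular not on $x_\mu$.

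It remains to produce the subelliptic gain $\comi\xi^{4/3}$. After a rotation of $y$, which leaves $\mathcal O$ invariant and only changes $H$ into a constant vector of the same length, I may take $\xi=\abs\xi\,e_n$, so $iy\cdot\xi=i\abs\xi\,y_n$, and then run the commutator argument underlying the basic Fokker--Planck subelliptic estimate (H\'erau--Nier \cite{MR2034753}, Helffer--Nier \cite{MR2130405}, \cite{MR3060700}). The point is that the direction $\partial_{y_n}$ is controlled — from ${\rm Re}\comi{\widehat P_\xi v,\ v}=\sum_j\norm{L_jv}^2$ one gets $\norm{\comi{y}v}+\norm{\comi{D_y}v}\le C\bigl(\sum_j\norm{L_jv}+\norm v\bigr)$ as in \eqref{uy} — and one bracket $[\partial_{y_n},\,iy\cdot\xi]=i\abs\xi$ of it with the transport term recovers the missing factor $\abs\xi$; with the characteristic $2/3$ loss, testing $\widehat P_\xi v$ against a suitable real multiplier of size $\comi\xi^{-2/3}$ then produces $\comi\xi^{4/3}\norm v^2\le C(\norm{\widehat P_\xi v}^2+\norm v^2)$. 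The error terms are disposed of using $\norm{\comi{y}v}$, $\norm{\comi{D_y}v}$ together with the identity $\norm{\widehat P_\xi v}^2=\norm{\mathcal O v}^2+\norm{\mathcal S v}^2+\comi{[\mathcal O,\mathcal S]v,\ v}$, in which $[\mathcal O,\mathcal S]=-2i\xi\cdot\partial_y+\tfrac12 b\cdot y$ is first order and $H$–free (because $\mathcal O$ commutes with the rotation generator $y\wedge\partial_y$), so that $\norm{\mathcal O v}$ and $\norm{\mathcal S v}$ are controlled modulo $\norm{\widehat P_\xi v}^2$, the $\comi b^{4/3}\norm v^2$–bound already in hand, and an arbitrarily small multiple of the leading term $(1+\abs b^2+\abs\xi^2)^{2/3}\norm v^2$ to be reabsorbed. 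Adding the $\comi\xi^{4/3}$– and $\comi b^{4/3}$–bounds, reabsorbing, and using $(1+t)^{2/3}\lesssim 1+t$, one obtains the displayed uniform inequality; undoing $\mathscr F_x$ finishes the proof.

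The main obstacle is keeping every constant uniform in $x_\mu$ in the presence of the magnetic term. It is harmless for the self-adjoint bookkeeping — being skew-adjoint it does not enter ${\rm Re}\comi{\widehat P_\xi v,\ v}$, and commuting with $\mathcal O$ it drops out of $[\mathcal O,\mathcal S]$ — but it does not commute with $iy\cdot\xi$ nor with $b\cdot\partial_y$, so in the multiplier computation it generates cross-terms of size $\abs H\comi\xi^{1/3}$ (and similar lower-order ones involving $\abs H\comi b^{1/3}$). These are handled by a case split: if $\abs H$ is small compared to $\comi\xi$ (resp. $\comi b$) they are absorbed exactly as in the $H\equiv0$ argument, while if $\abs H$ is large then, since $\abs H\le C_0\comi b^{\delta}$ with $\delta$ small (the hypothesis $\delta<2/3$, in full strength for \eqref{weg} in Section \ref{sec3} and in weaker form here), one gets $\comi\xi^{4/3}\lesssim\comi b^{4/3}$ and the desired gain is already contained in the weighted bound of the second paragraph, so nothing further is needed. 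Performing these absorptions while certifying that no constant depends on $x_\mu$ is the only genuinely delicate part; everything else is the word-for-word analogue of the $H\equiv0$ argument of \cite{MR3060700}.
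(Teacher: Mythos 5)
The paper does not actually prove Lemma \ref{PA}: it is dispatched with a one-line citation to the ``classical hypoelliptic technique'' (Helffer--Nier, Proposition 5.22), which strictly speaking covers only the case $H\equiv 0$. Your sketch therefore supplies an argument the paper omits, and its skeleton --- freeze the coefficients, apply $\mathscr F_x$ and Plancherel to reduce to a uniform bound for the $\xi$-parametrised family $\widehat P_\xi=\mathcal O+\mathcal S$ on $L^2(\mathbb R^n_y)$, then obtain the weight gain $\comi{b}^{4/3}$ and the frequency gain $\comi{\xi}^{4/3}$ separately --- is exactly the intended classical route. Your observation that the $\comi{b}^{4/3}$ part follows by applying the already-proved weighted estimate \eqref{weg} to $P_{x_\mu}$ itself (linear potential $b\cdot x$, so \eqref{+HyP} and \eqref{ead} hold with $s=0$, and \eqref{eh} holds for the constant field), with a constant depending only on $C_0$ and hence uniform in $x_\mu$, is clean and correct; so are the algebraic facts you use ($\mathcal S$ skew-adjoint, $[\mathcal O,\mathcal S]=-2i\,\xi\cdot\partial_y+\tfrac12 b\cdot y$ first order and $H$-free, the rotation reducing to $\xi=\abs{\xi}e_n$).

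The one step that does not close as written is the dichotomy you propose for the magnetic cross term, which has size $\abs{H}\comi{\xi}^{1/3}\norm{\nabla_yv}\,\norm{v}$. If the threshold for ``$\abs H$ large'' is $\comi{\xi}^{1/3}$, then in the small regime the term is $\le\comi{\xi}^{2/3}\norm{\nabla_yv}\norm{v}$ and is absorbable, but in the large regime $\comi{\xi}^{1/3}<C_0\comi{b}^{\delta}$ only gives $\comi{\xi}^{4/3}\lesssim\comi{b}^{4\delta}$, which is dominated by the available $\comi{b}^{4/3}$ bound only when $\delta\le 1/3$; raising the threshold to make the large regime work destroys the absorption in the small regime. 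So for $\delta\in(1/3,2/3)$ the case split fails. The fix is not a dichotomy but interpolation: use $\norm{\nabla_yv}^2\le C\bigl(\norm{\widehat P_\xi v}\,\norm{v}+\norm{v}^2\bigr)$ (the $\xi$-wise version of \eqref{elv}--\eqref{uy}) and Young's inequality with exponents $4$ and $4/3$, which bounds the cross term by $\eps\norm{\widehat P_\xi v}^2+\eps\comi{\xi}^{4/3}\norm{v}^2+C_\eps\comi{b}^{2\delta}\norm{v}^2$; since $2\delta<4/3$ the last piece is absorbed by the weighted bound. With that replacement your argument goes through for the full range $\delta<2/3$ claimed in \eqref{eh}.
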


\begin{proof}
	This follows from classical hypoelliptic technique, seeing  for instance \cite[Proposition
5.22]{MR2130405}.  We omit it here for brevity.  
\end{proof}

  \begin{proof}
  	[Completeness of the proof of Theorem \ref{+Hypo}:  subelliptic estimate]  In this part we will prove the subelliptic estimate \eqref{+++A1}.  Let $\varphi_\mu,\mu\geq1,$ be the partition of unit given in Lemma \ref{uni} and let $\tau$ be given in \eqref{+++A1}.  Then  we use Lemma  \ref{lemreg} to compute
  \begin{equation*}
     \norm{\comi{D_x}^{\tau} u}_{L^2}^2
    \leq  C\sum_{\mu\geq
   1}\norm{\inner{1-\triangle_x}^{ \tau/2}\varphi_\mu
    u}_{L^2}^2+C\norm{Pu}_{L^2}^2+C\norm{u}_{L^2}^2,
  \end{equation*}
  and moreover,  observing $\tau\leq 2/3$ and  using Fourier transform in $x$ if necessary,     
  \begin{equation*}
  \begin{aligned}
 &  \norm{\inner{1-\triangle_x}^{ \tau/2}\varphi_\mu
    u}_{L^2}^2\leq     \norm{\big(1+ \abs{\partial_xV(x_\mu)}^2-\triangle_x\big)^{\tau/2}
    \varphi_\mu
    u}_{L^2}^2\\
    &=   \norm{\big(1+ \abs{\partial_xV(x_\mu)}^2-\triangle_x\big)^{1/3}
    \big(1+ \abs{\partial_xV(x_\mu)}^2-\triangle_x\big)^{\frac{\tau}{2}-\frac{1}{3}}\varphi_\mu
    u}_{L^2}^2\\
    &\leq \norm{\big(1+ \abs{\partial_xV(x_\mu)}^2-\triangle_x\big)^{1/3}
    \comi{\partial_xV(x_\mu)}^{\tau-\frac{2}{3} }\varphi_\mu
    u}_{L^2}^2\\
    &\leq C\norm{P_{x_\mu} \comi{\partial_xV(x_\mu)}^{-\tilde s}\varphi_\mu u}_{L^2}^2+C\norm{\varphi_\mu u}_{L^2}^2
  \end{aligned}
  \end{equation*}
  where  $\tilde s=\frac{2}{3}-\tau\geq 0,$  and the last inequality follows from Lemma \ref{PA}.  As a result, combining the above estimates yields 
  \begin{equation*}
     \norm{\comi{D_x}^{\tau} u}_{L^2}^2
    \leq  C\sum_{\mu\geq
   1}\norm{P_{x_\mu} \comi{\partial_xV(x_\mu)}^{-\tilde s}\varphi_\mu u}_{L^2}^2 +C\norm{Pu}_{L^2}^2+C\norm{u}_{L^2}^2.
  \end{equation*}
  Thus the desired subelliptic estimate \eqref{+++A1} will follow
    if the following 
  \begin{equation}\label{09865}
    \sum_{\mu\geq 1}\norm{P_{x_\mu}\comi{\partial_xV(x_\mu)}^{-\tilde s} \varphi_\mu\,u}_{L^2}^2
    \leq C\set{\norm{P u}_{L^2}^2
    +\norm{u}_{L^2}^2} 
  \end{equation}
  holds for all $u\in C_0^\infty(\mathbb R^{2n})$, recalling $\tilde s=\frac{2}{3}-\tau$ with $\tau$ given in \eqref{+++A1}.  To prove \eqref{09865} we write
  \[
    \comi{\partial_xV(x_\mu)}^{-\tilde s}\varphi_\mu u
    =\big(\comi{\partial_xV(x)}^{\tilde s}\comi{\partial_xV(x_\mu)}^{-\tilde s}\big)
    \varphi_\mu  \comi{\partial_xV(x)}^{-\tilde s} u.
  \]
  Then
  \begin{eqnarray*}
  	\sum_{\mu\geq 1}\norm{P_{x_\mu}  \comi{\partial_xV(x_\mu)}^{-\tilde s}\varphi_\mu u}_{L^2}^2\leq S_1+S_2,
  \end{eqnarray*}
  with
  \begin{eqnarray*}
  \begin{aligned}
    S_1&= 2 \sum_{\mu\geq 1}\norm{\big( \comi{\partial_xV(x)}^{\tilde s}\comi{\partial_xV(x_\mu)}^{-\tilde
    s}\big)
    P_{x_\mu}\varphi_\mu   \comi{\partial_xV(x)}^{-\tilde s}
    u}_{L^2}^2\\
    S_2&=2\sum_{\mu\geq 1}\norm{\big[P_{x_\mu},~ \comi{\partial_xV(x)}^{\tilde s}\comi{\partial_xV(x_\mu)}^{-\tilde s}
    \big]\varphi_\mu   \comi{\partial_xV(x)}^{-\tilde s}
    u}_{L^2}^2.
    \end{aligned}
  \end{eqnarray*}
  Using \eqref{7310} gives
  \begin{eqnarray*}
    S_1&\leq& C\sum_{\mu\geq 1}\norm{
    P_{x_\mu}\varphi_\mu   \comi{\partial_xV(x)}^{-\tilde s}
    u}_{L^2}^2\\
    &\leq & C
    \set{\norm{P
    u}_{L^2}^2+\norm{P
     \comi{\partial_xV(x)}^{-\tilde s}u}_{L^2}^2+\norm{ \comi{\partial_xV(x)}^{-\tilde
    s}u}_{L^2}^2}\\
    &\leq & C \norm{Pu}_{L^2}^2+C \norm{u}_{L^2}^2,
  \end{eqnarray*}
  where in the first inequality  we use\eqref{wh} and Lemma \ref{patch}, and
  the last inequality holds because 
  \begin{multline}\label{coma}
  	\norm{P \comi{\partial_xV(x)}^{-\tilde s}u
    }_{L^2}^2 \leq  2 \norm{Pu}_{L^2}^2+2 \norm{\big[P,~ \comi{\partial_xV(x)}^{-\tilde s}\,\big]u}_{L^2}^2\\
  \leq   C \norm{Pu}_{L^2}^2+C \norm{  \comi{\partial_xV(x)}^{-\tilde s+s-1}\comi{y}u}_{L^2}^2\leq C \norm{Pu}_{L^2}^2+C \norm{u}_{L^2}^2
  \end{multline}
  due to \eqref{elv} and \eqref{uy} as well as the fact that $-\tilde s+s-1\leq 0$ and $\tilde s\geq 0.$    
  Similarly, following the argument in \eqref{coma} we have
  \begin{eqnarray*}
  	S_2\leq C
    \sum_{\mu\geq 1}\norm{\varphi_\mu\comi{y}
    u}_{L^2}^2\leq  
 C\norm{\comi{y}u}_{L^2}^2\leq C \norm{Pu}_{L^2}^2+C \norm{u}_{L^2}^2.
  \end{eqnarray*}
  This with the estimate on $S_1$ yields  \eqref{09865}, and thus the subelliptic estimate \eqref{+++A1} follows. The proof of  Theorem \ref{+Hypo} is  completed.
  	  \end{proof}

    \section{Maximal estimate}\label{sec22}  
    
    In this part we investigate the maximal estimate, i.e., Corollary  \ref{cormax}. 
    First we list some commutation relations to be used below.  Let $Q$ and $L_j, 1\leq j\leq n,$ be given at the beginning of Section \ref{sec3}.   By direct verification we have 
\begin{equation}\label{relof}
	[L_j,~L_k]=[L_j^*,~L_k^*]=0,\quad [L_j,~L_k^*]=1 \ \textrm{if}\  j=k  \ \textrm{and}\  0 \ \textrm{otherwise},
\end{equation}
and moreover
\begin{equation}\label{ce}
	 [Q,~L_j^*]=\partial_{x_j}-\frac{1}{2}\partial_{x_j}
   V(x)+H(x)\cdot\inner{\bm e_j\wedge \partial_y}-\frac{1}{2}H(x)\cdot\inner{y\wedge \bm e_j},
\end{equation}
where  $\bm e_j=(0,\cdots, 1, \cdots,0)\in\mathbb R^n$ with only $j$-th component  equal to $1$.

\begin{proof}[Proof of Corollary \ref{cormax}]   Using
\eqref{elv} gives
\begin{eqnarray*}
  \norm{L_jL_j^*u}_{L^2}^2
  &\leq& {\rm Re}\comi{PL_j^*u,~L_j^*
  u}_{L^2}\\
  &=&{\rm Re}\comi{[P,~L_j^*]u,~L_j^*
  u}_{L^2}+ {\rm Re}\comi{Pu,~L_jL_j^*
  u}_{L^2}\\
  &\leq&{\rm Re}\comi{[P,~L_j^*]u,~L_j^*
  u}_{L^2}+{1\over2}
  \norm{L_jL_j^*u}_{L^2}^2+2\norm{Pu}_{L^2}^2.
\end{eqnarray*}
Hence
\begin{equation}\label{les}
  \norm{L_jL_j^*u}_{L^2}^2
  \leq 2\abs{\comi{[P,~L_j^*]u,~L_j^*
  u}_{L^2}}+4\norm{Pu}_{L^2}^2.
\end{equation}
Moreover it follows from \eqref{relof}-\eqref{ce} that
\[
  [P,~L_j^*]=\partial_{x_j}-\frac{1}{2}\partial_{x_j}
   V(x)+H(x)\cdot\inner{\bm e_j\wedge \partial_y}-\frac{1}{2}H(x)\cdot\inner{y\wedge \bm e_j}+L_j^*,
\]
and thus, by virtue of \eqref{eh}, 
\begin{eqnarray*}
\begin{aligned}
  & \abs{\comi{[P,~L_j^*]u,~L_j^*u}_{L^2}} \\
  &\leq 
   \comi{L_j^*u,~L_j^*u}_{L^2}
   +\Big|\big<\big(\partial_{x_j}-\frac{1}{2}\partial_{x_j}
   V +H \cdot\inner{\bm e_j\wedge \partial_y}-\frac{1}{2}H \cdot\inner{y\wedge \bm e_j}\big)u,~L_j^*u\big>_{L^2}\Big|\\
   &\leq  C\set{\norm{\comi{\partial_{x}V}^{2/3}u}_{L^2}^2
   +\norm{\comi{D_x}^{2/3}u}_{L^2}^2+\norm{L_ju}_{L^2}^2+\norm{u}_{L^2}^2}\\
   &\quad +C\set{\norm{L_j\comi{\partial_{x}V }^{1/3}u}_{L^2}^2+
   \norm{L_j\comi{D_x}^{1/3}u}_{L^2}^2}\\
   &\leq  C\set{\norm{\comi{\partial_{x}V}^{2/3}u}_{L^2}^2
   +\norm{\comi{D_x}^{2/3}u}_{L^2}^2+\norm{Pu}_{L^2}^2+\norm{u}_{L^2}^2}+
   C\norm{L_j\comi{D_x}^{1/3}u}_{L^2}^2,
   \end{aligned}
\end{eqnarray*}
the last inequality following from Lemma \ref{lewe} and \eqref{elv}.   For the last term  on the right side  we use \eqref{relof} to compute directly, for $\eps>0,$
\begin{eqnarray*}
  \norm{L_j\comi{D_{x}}^{1/3}u}_{L^2}^2= \big<L_j^*L_j u,\  \comi{D_{x}}^{2/3}u\big>_{L^2}  \leq 
  \eps\norm{L_jL_j^*u}_{L^2}^2+C_\eps\set { \norm{\comi{D_{x} }^{2\over 3}u}_{L^2}^2
  +\norm{u}_{L^2}^2}.
\end{eqnarray*}
As a result, combining the above estimates we obtain
\begin{equation*}\label{0909281}
   \abs{\comi{[P, L_j^*]u, L_j^*u}_{L^2}}
   \leq \eps \norm{L_jL_j^*u}_{L^2}^2 +
   C_\eps\Big\{\norm{\comi{\partial_{x}V}^{2\over3}u}_{L^2}^2
   +\norm{\comi{D_x}^{2\over3}u}_{L^2}^2
   +\norm{Pu}_{L^2}^2+\norm{ u}_{L^2}^2\Big\},
\end{equation*}
which, together with \eqref{les} and the   estimates \eqref{weg} and \eqref{+++A1} with $\tau=2/3$ therein,    yields
\begin{eqnarray*}
	\sum_{1\leq j\leq n} \norm{L_jL_j^*u}_{L^2}^2 &\leq & C\Big\{\norm{\comi{\partial_{x}V}^{2\over3}u}_{L^2}^2
   +\norm{\comi{D_x}^{2\over3}u}_{L^2}^2
   +\norm{Pu}_{L^2}^2+\norm{ u}_{L^2}^2\Big\} \\
   &\leq & C\Big\{ \norm{Pu}_{L^2}^2+\norm{ u}_{L^2}^2\Big\}.
\end{eqnarray*}
  The gives the assertion in Corollary \ref{cormax}, completing the proof. \end{proof}

\bigskip
\noindent{\bf Acknowledgements}  This work was supported by  NSFC (Nos. 11871054, 11961160716, 11771342)  and   Fundamental Research Funds for the Central Universities(2042020kf0210).


\begin{thebibliography}{10}

\bibitem{2018arXiv181206645B}
M.~{Ben Said}.
\newblock {Global subelliptic estimates for Kramers-Fokker-Planck operators
  with some class of polynomials}.  Preprint(2018), arXiv:1812.06645.  To appear in 
\newblock {\em J. Inst. Math. Jussieu}.   


\bibitem{2020arXiv201016175H}
B.~{Helffer} and Z.~{Karaki}.
\newblock {Maximal estimates for the Kramers-Fokker-Planck operator with
  electromagnetic field}.
\newblock {\em Preprint(2020), arXiv:2010.16175}.

\bibitem{MR2130405}
B.~Helffer and F.~Nier.
\newblock {\em Hypoelliptic estimates and spectral theory for {F}okker-{P}lanck
  operators and {W}itten {L}aplacians}, volume 1862 of {\em Lecture Notes in
  Mathematics}.
\newblock Springer-Verlag, Berlin, 2005.

\bibitem{MR897103}
B.~Helffer and J.~Nourrigat.
\newblock {\em Hypoellipticit\'e maximale pour des op\'erateurs polyn\^omes de
  champs de vecteurs}, volume~58 of {\em Progress in Mathematics}.
\newblock Birkh\"auser Boston Inc., Boston, MA, 1985.

\bibitem{MR2034753}
F.~H\'{e}rau and F.~Nier.
\newblock Isotropic hypoellipticity and trend to equilibrium for the
  {F}okker-{P}lanck equation with a high-degree potential.
\newblock {\em Arch. Ration. Mech. Anal.}, 171(2):151--218, 2004.

\bibitem{Hormander85}
L.~H{\"o}rmander.
\newblock {\em The analysis of linear partial differential operators. {III}},
  volume 275 of {\em Grundlehren der Mathematischen Wissenschaften}.
\newblock Springer-Verlag, Berlin, 1985.

\bibitem{2019arXiv190108120K}
Z.~{Karaki}.
\newblock {Maximal estimates for the Fokker-Planck operator with strong
  magnetic field}.
\newblock {\em Preprint (2019), arXiv:1901.08120}.  

\bibitem{MR660652}
J.~J. Kohn.
\newblock Lectures on degenerate elliptic problems.
\newblock In {\em Pseudodifferential operator with applications ({B}ressanone,
  1977)}, pages 89--151. Liguori, Naples, 1978.

\bibitem{MR2599384}
N.~Lerner.
\newblock {\em Metrics on the phase space and non-selfadjoint
  pseudo-differential operators}.
\newblock Birkh\"auser Verlag, Basel, 2010.

\bibitem{MR3060700}
W.-X. Li.
\newblock Global hypoellipticity and compactness of resolvent for
  {F}okker-{P}lanck operator.
\newblock {\em Ann. Sc. Norm. Super. Pisa Cl. Sci. (5)}, 11(4):789--815, 2012.

\bibitem{MR3783786}
W.-X. Li.
\newblock Compactness criteria for the resolvent of the {F}okker-{P}lanck
  operator.
\newblock {\em Ann. Sc. Norm. Super. Pisa Cl. Sci. (5)}, 18(1):119--143, 2018.

\bibitem{MR3775156}
W.-X. Li.
\newblock Compactness of the resolvent for the {W}itten {L}aplacian.
\newblock {\em Ann. Henri Poincar\'{e}}, 19(4):1259--1282, 2018.

\bibitem{MR436223}
L.~P. Rothschild and E.~M. Stein.
\newblock Hypoelliptic differential operators and nilpotent groups.
\newblock {\em Acta Math.}, 137(3-4):247--320, 1976.

\bibitem{MR4156527}
M.~B. Said, F.~Nier, and J.~Viola.
\newblock Quaternionic structure and analysis of some
  {K}ramers-{F}okker-{P}lanck operators.
\newblock {\em Asymptot. Anal.}, 119(1-2):87--116, 2020.

\end{thebibliography}

\end{document}